\newtheorem{theorem}{Theorem}[section]
\newtheorem{lemma}[theorem]{Lemma}
\newtheorem{proposition}[theorem]{Proposition} 
\newtheorem{corollary}[theorem]{Corollary} 
\newtheorem{fact}[theorem]{Fact} 
\theoremstyle{definition}
\newtheorem{definition}[theorem]{Definition}
\theoremstyle{remark}
\newtheorem{remark}[theorem]{Remark}
\newcommand{\brfr}{$\hspace{0 pt}$}
\newcommand{\brfrt}{\hspace{0 pt}}
\DeclareMathOperator{\cf}{cf}
\DeclareMathOperator{\rk}{RK}
\newcommand{\m}{\mathfrak}
\newcommand{\bigbox}{{ \Box}} %
\title [generalizations of pseudocompactness]
{More generalizations of pseudocompactness}
\author[]{Paolo Lipparini} 
\address{Dipartimento  Matematic\ae\\
Viale della Ricerca Scientifica\\
II Universit\`a di Roma (Tor Vergata)\\
I-00133 ROME 
ITALY}
\urladdr{http://www.mat.uniroma2.it/\textasciitilde lipparin}
\thanks{The author has received support from MPI and GNSAGA.
We wish to express our gratitude to X. Caicedo
and S. Garcia-Ferreira for stimulating discussions and correspondence} 
\keywords{Pseudocompactness, $\mathcal O$-$ [\mu, \lambda ] $-compactness, $D$-pseudocompactness, $D$-limits, in products, regular ultrafilter, family of subsets of a topological space} 
\subjclass[2010]{Primary 54D20;
Secondary 54A20, 54B10}
\begin{document} 

\begin{abstract} 
We introduce  a covering notion depending on two cardinals, 
which we call $\mathcal O  $-$ [ \mu, \lambda ]$-compactness,
and which
encompasses both pseudocompactness and many other generalizations 
of pseudocompactness.
For Tychonoff spaces, pseudocompactness turns out to be equivalent
to $\mathcal O  $-$ [ \omega, \omega  ]$-compactness.

We provide several characterizations of $\mathcal O  $-$ [ \mu, \lambda ]$-compactness, and we discuss 
its connection with $D$-pseudocompactness, for $D$ an ultrafilter. We analyze the
behaviour of the above notions with respect to products.

Finally, we show that our results hold in a more general framework,
in which compactness properties are defined relative to an arbitrary family 
 of subsets of some topological space $X$.
\end{abstract}

\maketitle  

\section{Introduction} \label{intro} 

As well-known, there are many equivalent reformulations of pseudocompactness. See, e. g. \cite{St}. Various generalizations and extensions of pseudocompactness have been introduced by many authors; see, among others, \cite{Ar, CNcc,Fr,GF,GS,Gl,Ke,LiF,Re,Sa,SS2,SS,St,SV}.
We introduce here some more pseudocompactness-like properties, focusing mainly
on  notions related to covering properties and ultrafilter convergence. 

The most  general form of our notion 
depends on
two cardinals $\mu$ and $\lambda$;
we call it
$\mathcal O$-$ [ \mu, \lambda ]$-compactness.
It generalizes and unifies several
pseudocompactness-like notions appeared before.
See Remark \ref{precedenti}.
In a sense, $\mathcal O$-$ [ \mu, \lambda ]$-compactness
is to pseudocompactness what $ [ \mu, \lambda ]$-compactness
is to countable compactness. See Remark \ref{mlcp}.
In particular, for Tychonoff spaces,
$\mathcal O  $-$ [ \omega, \omega  ]$-compactness
turns out to be equivalent to 
pseudocompactness.

We find many conditions equivalent to $\mathcal O  $-$ [ \mu, \lambda ]$-compactness. In particular, a characterization by means
of ultrafilters, Theorem \ref{ufo},  plays an important role in this paper.
It provides a connection between 
$\mathcal O  $-$ [ \mu, \lambda ]$-compactness
and $D$-pseudocompactness, for $D$  a $(\mu, \lambda )$-regular ultrafilter.
The notion of a $(\mu, \lambda )$-regular ultrafilter
arose in a model-theoretical setting, and has proved useful
also in some areas of set-theory, and even in topology.
See \cite{mru,topappl} for references.

More sophisticated results are involved when we deal with products,
since $D$-pseudocompactness is productive, but 
$\mathcal O  $-$ [ \mu, \lambda ]$-compactness is not productive,
as well known in the special case $ \mu= \lambda = \omega $, that is, pseudocompactness.
We show that if $D$ is a 
$(\mu, \lambda )$-regular ultrafilter, then
every  $D$-pseudocompact topological space $X$ is
$\mathcal O  $-$ [ \mu, \lambda ]$-compact, hence all
(Tychonoff) powers of $X$ are 
$\mathcal O  $-$ [ \mu, \lambda ]$-compact, too (Corollary \ref{regimpcpn}).
The situation is in part parallel to the relationship between
the more classical notions of $D$-compactness and $ [ \mu, \lambda ]$-compactness.
In this latter case, an equivalence holds: all powers 
of a topological space $X$ are $ [ \mu, \lambda ]$-compact 
if and only if there is some
$(\mu, \lambda )$-regular ultrafilter $D$ such that 
$X$ is  $D$-compact.
We show that an analogous result holds for $D$-pseudocompactness,
provided we deal with a notion slightly stronger than 
$\mathcal O  $-$ [ \mu, \lambda ]$-compactness. See 
Definition \ref{fcpnopbox} and Theorem \ref{fprodo}. 
In particular, we provide a characterization of those spaces  
which are 
$D$-pseudocompact, for
some
$(\mu, \lambda )$-regular ultrafilter $D$.

In the final section of this note we mention that our results generalize to the abstract framework presented in \cite{LiF}. That is, our proofs work essentially unchanged both for pseudocompactness-like notions and for the corresponding compactness notions.
 In \cite{LiF} each compactness property is defined relative to a family 
$\mathcal F$ of subsets of some topological space $X$. 
The pseudocompactness case is obtained when
$\mathcal F=\mathcal O$, the family of all nonempty open sets of $X$. 
When $\mathcal F$ is the family of all singletons of $X$, we obtain 
results related to $[ \mu, \lambda ]$-compactness. 

\medskip

Our notation is fairly standard.
Unless
explicitly mentioned, we assume no separation axiom.
However, the reader is warned that there are many conditions equivalent to
pseudocompactness, but the equivalence holds only assuming some separation 
axiom (they are all equivalent only for Tychonoff spaces).
For Tychonoff spaces, the particular case $ \mu= \lambda = \omega $
of our definitions of 
$\mathcal O$-$ [ \mu, \lambda ]$-compactness
(Definition \ref{fcpnop}) turns out to be equivalent to pseudocompactness, 
but this is not necessarily the case for spaces with lower separation properties.
See Remark \ref{precedenti}.

\section{A two cardinal generalization of pseudocompactness} \label{genps}

The following definition originally appeared in \cite{LiF}
in a more general framework. The letter $\mathcal O$  is intended to 
denote the family of all the nonempty open sets of some topological space $X$. In this sense, the definition of $\mathcal O$-$ [ \mu, \lambda ]$-compactness
is the particular case $\mathcal F= \mathcal O$  of the definition of 
$\mathcal F$-$ [ \mu, \lambda ]$-compactness in \cite[Definition 4.2]{LiF}.
See also Section \ref{abstrfr}.

\begin{definition} \label{fcpnop}
We say that a topological space $X$ is
$\mathcal O$-$ [ \mu, \lambda ]$-\emph{compact}
if and only if the following holds.

For every sequence $( C _ \alpha ) _{ \alpha \in \lambda } $
 of closed sets of $X$, if,
for every $Z \subseteq \lambda $ with $ |Z|< \mu$,
there exists a nonempty open set $O_Z$ of $X$   such that   
$ \bigcap _{ \alpha \in Z}  C_ \alpha \supseteq O_Z$,
then  
$ \bigcap _{ \alpha \in \lambda }  C_ \alpha \not= \emptyset $.

Clearly, in the above definition, we can equivalently let
$O_Z$ vary among the (nonempty) elements of some base of $X$, rather
than among all  nonempty open sets.
Also, by considering complements, we have that 
$\mathcal O$-$ [ \mu, \lambda ]$-compactness
is equivalent to the following statement.

For every $\lambda$-indexed open cover
 $( Q _ \alpha ) _{ \alpha \in \lambda } $
 of $X$, there exists $Z \subseteq \lambda $, with $ |Z|< \mu $,
such that
$ \bigcup _{ \alpha \in Z}  Q_ \alpha$
is dense in $X$.
\end{definition}

\begin{remark} \label{mlcp} 
The notion of 
$\mathcal O$-$ [ \mu, \lambda ]$-compactness
should be compared with the more classical notion of
$ [ \mu, \lambda ]$-compactness.

A topological space $X$ is
$ [ \mu, \lambda ]$-\emph{compact}
if and only if, 
for every sequence $( C _ \alpha ) _{ \alpha \in \lambda } $
 of closed sets of $X$, if
$ \bigcap _{ \alpha \in Z}  C_ \alpha \not= \emptyset  $,
for every $Z \subseteq \lambda $ with $ |Z|< \mu$,
then  
$ \bigcap _{ \alpha \in \lambda }  C_ \alpha \not= \emptyset $.

Thus, in the definition of $ [ \mu, \lambda ]$-compactness,
we require only the weaker assumption that
$ \bigcap _{ \alpha \in Z}  C_ \alpha  $ is nonempty, 
for every $Z \subseteq \lambda $ with $ |Z|< \mu$,
rather than requiring that $ \bigcap _{ \alpha \in Z}  C_ \alpha  $ contains some nonempty open set.
In particular, every 
$ [ \mu, \lambda ]$-compact space is
$\mathcal O$-$ [ \mu, \lambda ]$-compact.

Thus, $ [ \omega , \omega  ]$-compactness is the same as countable compactness,
which is the analogue of pseudocompactness for  $\mathcal O$-$ [ \mu, \lambda ]$-compactness.
Many of the results presented here are versions for $\mathcal O$-$ [ \mu, \lambda ]$-compactness
of known results about 
$ [ \mu, \lambda ]$-compactness. Indeed,
a simultaneous method of proof is available 
for both cases, and shall be mentioned in Section \ref{abstrfr}.

Notice that $ [ \mu, \lambda ]$-compactness is a notion which encompasses both 
Lindel\"ofness (more generally, $\kappa$-final compactness) and countable compactness (more generally, $\kappa$-initial compactness). See, e. g.,
\cite{C,G,topproc,topappl,Vfund} and references there for further information about 
$ [ \mu, \lambda ]$-compactness.
\end{remark}

\begin{remark} \label{precedenti}    
For Tychonoff spaces, 
$\mathcal O$-$ [ \omega , \omega ]$-compactness is equivalent to pseudocompactness. 
Without assuming $X$ to be Tychonoff, 
 $\mathcal O$-$ [ \omega , \omega ]$-compactness
turns out to be equivalent to a condition which is usually
called \emph{feeble compactness}. See \cite[Theorem 4.4(1) and Remark 4.5]{LiF} and \cite{St}.
 
More generally, the particular case $ \mu= \omega $ of Definition 
\ref{fcpnop},
that is,  
$\mathcal O$-$ [ \omega , \lambda  ]$-compactness,  
has been introduced and studied in 
\cite{Fr}, 
where it 
 is called
\emph{almost $\lambda$-compactness}.
The notion of $\mathcal O$-$ [ \omega , \lambda  ]$-compactness has also been
studied, under different names, in
\cite{SS2},  
as
  \emph{weak-$ \lambda $-$ \aleph_0$-compactness},
and in 
\cite{Re,SV} 
as
  \emph{weak initial $\lambda$-compactness}.

Moreover, \cite{Fr} introduced also a notion which corresponds to 
$\mathcal O$-$ [ \mu ,\lambda]$-compactness for all cardinals 
$\lambda$, calling it \emph{almost $\mu$-Lindel\"ofness}.

Assuming that
$X$ is a Tychonoff space,
  a property equivalent to 
$\mathcal O$-$ [ \kappa , \kappa  ]$-compactness,
has been introduced in \cite{CNcc} under the name \emph{pseudo-$( \kappa, \kappa )$-compactness}.  
See \cite[Theorem 4.4]{LiF}.

Definition \ref{fcpnop} generalizes all the above mentioned 
notions.

See \cite{Ar,CNcc,Fr,GF,GS,Gl,Ke,LiF,Re,Sa,SS2,SS,St,SV} for the study of further related notions.
\end{remark}

For $\lambda$, $\mu$ infinite cardinals, $S_ \mu ( \lambda )$
denotes the set of all subsets of $\lambda$ of cardinality $<\mu$.  
We put $ \lambda ^{<\mu} = \sup _{\mu' < \mu} \lambda ^{ \mu'}$.
Thus, $ \lambda ^{<\mu}$ is the cardinality of  $S_ \mu ( \lambda )$.

In the next proposition we present some useful
conditions equivalent to $\mathcal O$-$ [ \mu, \lambda ]$-compactness.
A further important characterization will be presented in Theorem \ref{ufo}.

\begin{proposition} \label{equivo} 
For every topological space $X$ and infinite cardinals $\lambda$ and $\mu$, 
the following are equivalent.

\begin{enumerate} 
\item
$X$ is
$\mathcal O$-$ [ \mu, \lambda ]$-compact.

\item
For every  sequence  $( P _ \alpha ) _{ \alpha \in \lambda } $
of subsets of $X$, if,
for every $Z \subseteq \lambda $ with $ |Z|< \mu$,
there exists a nonempty open set $O_Z$ of $X$   such that   
$ \bigcap _{ \alpha \in Z}  P_ \alpha \supseteq O_Z$,
then  
$ \bigcap _{ \alpha \in \lambda }  \overline{P}_ \alpha \not= \emptyset $.

\item
For every  sequence  $( Q _ \alpha ) _{ \alpha \in \lambda } $
 of open sets of $X$, if,
for every $Z \subseteq \lambda $ with $ |Z|< \mu$,
there exists a nonempty open set $O_Z$ of $X$   such that   
$ \bigcap _{ \alpha \in Z}  Q_ \alpha \supseteq O_Z$,
then  
$ \bigcap _{ \alpha \in \lambda }  \overline{Q}_ \alpha \not= \emptyset $.

\item
For every  sequence 
$\{ O_Z \mid Z \in S_ \mu ( \lambda ) \}$
of nonempty open sets of $X$,
it happens that
    $ \bigcap _{ \alpha \in \lambda } 
 \overline{  \bigcup  \{ O_Z \mid  Z \in S_ \mu ( \lambda ),  \alpha \in Z \}}
  \not= \emptyset $.

\item
For every  sequence 
$\{ O_Z \mid Z \in S_ \mu ( \lambda ) \}$
of nonempty open sets of $X$,
the following holds.
If, for every finite subset $W$  of $\lambda$,
we put  
$Q_W= 
  \bigcup \{ O_Z \mid Z \in S_ \mu ( \lambda ) \text{ and } Z \supseteq
W\}$,
then 
$\bigcap \{ \overline{Q_W} \mid W \text{ is a finite subset of } \lambda  \}\not= \emptyset $.

\item
For every  sequence 
$\{ C_Z \mid Z \in S_ \mu ( \lambda ) \}$
of closed sets of $X$, such that each
$C_Z$ is properly contained in $X$,
if we let, for $\alpha \in \lambda $,
$P_ \alpha $ be the interior
of 
$ \bigcap  \{ C_Z \mid  Z \in S_ \mu ( \lambda ),  \alpha \in Z \}$, then
we have that 
$(P_ \alpha ) _{ \alpha \in \lambda } $
is not a cover of $X$.  
\end{enumerate}
 \end{proposition}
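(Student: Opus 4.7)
The plan is to establish the cycle $(1) \Rightarrow (2) \Rightarrow (3) \Rightarrow (4) \Rightarrow (1)$, and then add the two equivalences $(4) \Leftrightarrow (5)$ and $(4) \Leftrightarrow (6)$.

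For the cycle through $(1)$--$(4)$, the first two steps are essentially formal. For $(1) \Rightarrow (2)$, I apply $(1)$ to the closed sets $\overline{P_\alpha}$: the hypothesis transfers without change because $O_Z \subseteq \bigcap_{\alpha \in Z} P_\alpha \subseteq \bigcap_{\alpha \in Z} \overline{P_\alpha}$. The implication $(2) \Rightarrow (3)$ is immediate since open sets are a special case of subsets. For $(3) \Rightarrow (4)$, given $\{O_Z\}_{Z \in S_\mu(\lambda)}$, I set $Q_\alpha = \bigcup\{O_Z : \alpha \in Z\}$, so each $Q_\alpha$ is open; for any $Z \in S_\mu(\lambda)$, the set $O_Z$ is itself a nonempty open subset of $\bigcap_{\alpha \in Z} Q_\alpha$ (since $\alpha \in Z$ forces $O_Z$ to appear in the union defining $Q_\alpha$), so $(3)$ applies and delivers exactly the conclusion of $(4)$. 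For $(4) \Rightarrow (1)$, I apply $(4)$ to the witnesses $\{O_Z\}$ associated with the given $(C_\alpha)$: whenever $\alpha \in Z$, $O_Z \subseteq \bigcap_{\beta \in Z} C_\beta \subseteq C_\alpha$, so by closedness $\overline{\bigcup\{O_Z : \alpha \in Z\}} \subseteq C_\alpha$, and the nonempty intersection furnished by $(4)$ sits inside $\bigcap_\alpha C_\alpha$.

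For $(4) \Leftrightarrow (5)$, the direction $(5) \Rightarrow (4)$ is immediate on taking $W = \{\alpha\}$, since then $Q_{\{\alpha\}} = \bigcup\{O_Z : \alpha \in Z\}$ and the intersection in $(5)$ is a refinement of that in $(4)$. For $(4) \Rightarrow (5)$, I reindex: the set of finite subsets of $\lambda$ has cardinality $\lambda$, so the family $(Q_W)_{W \text{ finite}}$ may be viewed as a $\lambda$-sequence of subsets of $X$, to which I apply the (already equivalent) condition $(2)$. Given a subcollection $\mathcal Z$ of finite subsets with $|\mathcal Z| < \mu$, the union $V = \bigcup \mathcal Z$ still satisfies $|V| < \mu$ (a union of fewer than $\mu$ finite sets has cardinality below $\mu$ for any infinite $\mu$), so $V \in S_\mu(\lambda)$, and the original $O_V$ serves as a witness since $V \supseteq W$ gives $O_V \subseteq Q_W$ for every $W \in \mathcal Z$.

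For $(4) \Leftrightarrow (6)$, I use the complementation duality: setting $O_Z = X \setminus C_Z$ (or conversely), the $O_Z$ are nonempty and open precisely when the $C_Z$ are closed and properly contained in $X$, and the identity $X \setminus \operatorname{int}(A) = \overline{X \setminus A}$ yields $X \setminus P_\alpha = \overline{\bigcup\{O_Z : \alpha \in Z\}}$. Hence $(P_\alpha)_{\alpha \in \lambda}$ fails to cover $X$ if and only if $\bigcap_\alpha \overline{\bigcup\{O_Z : \alpha \in Z\}} \neq \emptyset$, which is exactly the conclusion of $(4)$. The main technical delicacy lies in $(4) \Rightarrow (5)$, where the reindexing and the cardinality estimate on $|V|$ must be handled carefully; the remaining steps are routine manipulations with closures, interiors, and complements.
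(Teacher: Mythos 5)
Your proof is correct and follows essentially the same route as the paper: the same constructions ($Q_\alpha=\bigcup\{O_Z:\alpha\in Z\}$ for $(3)\Rightarrow(4)$, the auxiliary closed sets $C'_\alpha$ for $(4)\Rightarrow(1)$, complementation for $(6)$), and the same key cardinality observation that a union of fewer than $\mu$ finite sets has size below $\mu$. The only difference is organizational — the paper cycles $(3)\Rightarrow(5)\Rightarrow(4)$ where you prove $(3)\Rightarrow(4)$ directly and then derive $(5)$ from $(4)$ via $(2)$ — but the underlying argument for the $(5)$-equivalence is identical.
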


\begin{proof}
(1) $\Rightarrow $  (2) 
Just take $C_ \alpha =\overline{P}_ \alpha$, for
$ \alpha \in \lambda $.

(2) $\Rightarrow $  (3) is trivial.

(3) $\Rightarrow $  (5)
The  sequence  $\{ Q_W \mid W \text{ is a finite subset of } \lambda  \}$
is a  sequence  of $\lambda$ open sets of $X$, since there are
$\lambda$ finite subsets of $\lambda$.

For every  $ \nu < \mu$,
if
$(W_ \beta ) _{ \beta \in \nu} $ is a  sequence  of finite subsets of $\lambda$,
then $Z = \bigcup _{ \beta \in \nu} W _ \beta $ has cardinality $ \leq \nu$,
and thus belongs to $S_ \mu ( \lambda ) $. Moreover, 
for each $ \beta \in \nu$, we have that $Z \supseteq W_ \beta $,
hence $Q _{W_ \beta } \supseteq O_Z $.  
This implies that
 $  \bigcap _{ \beta \in \nu} Q _{W_ \beta } \supseteq O_Z $.

We have proved that the  sequence  
$\{ Q_W \mid W \text{ a finite subset of } \lambda  \}$
is a  sequence  of $\lambda$ open sets of $X$ such that 
the intersection of $<\mu$ members of the  sequence  
contains some nonempty open set of $X$.
By applying (3) to this sequence, we have that
$\bigcap \{ \overline{Q_W} \mid W \text{ is a finite subset of } \lambda  \}\not= \emptyset $.

(5) $\Rightarrow $  (4) is trivial.

(4) $\Rightarrow $  (1)
Suppose that 
$( C _ \alpha ) _{ \alpha \in \lambda } $
and 
$O_Z$,
for  $Z \subseteq \lambda $ with $ |Z|< \mu$,
are as in the premise of the definition of 
$\mathcal O$-$ [ \mu, \lambda ]$-compactness.

For $ \alpha \in \lambda $, let
$C'_ \alpha = 
\overline{  \bigcup  \{ O_Z \mid  Z \in S_ \mu ( \lambda ),  \alpha \in Z \}}$. Since  
$C_ \alpha $ is closed, and 
$C_ \alpha \supseteq O_Z$
whenever  $ \alpha \in Z$, we have that
$C_ \alpha \supseteq C'_ \alpha  $.
By (4), 
$ \bigcap _{ \alpha \in \lambda }  C'_ \alpha \not= \emptyset $, hence also
$ \bigcap _{ \alpha \in \lambda }  C_ \alpha \not= \emptyset $.
Thus we have proved (1). 

We shall also give a direct proof of (3) $\Rightarrow $  (4),
since it is very simple.
Given the  sequence 
$\{ O_Z \mid  Z \in S_ \mu ( \lambda ) \} $
then, for every $ \alpha \in \lambda $,
put   $Q_ \alpha =  \bigcup  \{ O_Z \mid  Z \in S_ \mu ( \lambda ),  \alpha \in Z \} $.
 For every $Z \in S_ \mu ( \lambda )$,
and every   $ \alpha \in Z$,
we have that  $Q_ \alpha \supseteq  O_Z $.
Hence, 
for every $Z \in S_ \mu ( \lambda )$,
we get $ \bigcap _{ \alpha \in Z}  Q_ \alpha \supseteq O_Z$,
so that we can apply (3).

(4) $\Leftrightarrow $  (6)
is immediate by taking complements.
 \end{proof}

In the particular case when
$\mu=\lambda$ is regular,
there are many more conditions equivalent 
to 
$\mathcal O$-$ [ \lambda , \lambda ]$-compactness.

\begin{theorem} \label{equivcpnrego}
Suppose that $X$ is a topological space,
and $ \lambda $ is a regular cardinal. Then
the following conditions are equivalent.
 
(a) $X$ is $\mathcal O$-$ [ \lambda , \lambda ]$-compact.

(b) Suppose that  $( C _ \alpha ) _{ \alpha \in \lambda } $
is a  sequence  of closed sets of $X$ such that $C_ \alpha \supseteq C_ \beta $,
whenever $ \alpha \leq \beta  < \lambda $.
If, for every $ \alpha \in \lambda $, there exists   
a nonempt open set $O$ of $X$ such that   
$  C_ \alpha \supseteq O$,
then  
$ \bigcap _{ \alpha \in \lambda }  C_ \alpha \not= \emptyset $.

(c) Suppose that  $( C _ \alpha ) _{ \alpha \in \lambda } $
is a  sequence  of closed sets of $X$ such that $C_ \alpha \supseteq C_ \beta $,
whenever $ \alpha \leq \beta  < \lambda $.
Suppose further that, for every $ \alpha \in \lambda $,
$C _ \alpha $ is the closure of some open set of $X$.   
If, for every $ \alpha \in \lambda $, there exists   
a nonempt open set $O$ of $X$ such that   
$  C_ \alpha \supseteq O$,
then  
$ \bigcap _{ \alpha \in \lambda }  C_ \alpha \not= \emptyset $.

(d) For every sequence $ (O _ \alpha ) _{ \alpha \in \lambda } $  
of nonempty open sets of $X$,
there exists $x \in X$ such that 
$|\{ \alpha \in \lambda  \mid U \cap O_ \alpha  \not= \emptyset  \}|= \lambda $,
for every neighborhood $U$ of $x$ in $X$. 

(e) For every sequence $ (O _ \alpha ) _{ \alpha \in \lambda } $  of 
nonempty open sets of $X$,
 there exists some ultrafilter $D$ uniform over $\lambda$ such that
 $ (O _ \alpha ) _{ \alpha \in \lambda } $ has a $D$-limit point
(see Definition \ref{dlim}).

(f) For every $\lambda$-indexed open cover
 $( O _ \alpha ) _{ \alpha \in \lambda } $
 of $X$, such that $O_ \alpha \subseteq O_ \beta $
whenever $ \alpha \leq \beta  < \lambda $, 
there exists $ \alpha \in \lambda$ such that 
$O_ \alpha $ is dense in $X$.

In all the above statements we can equivalently require that
the elements of the sequence  $( C _ \alpha ) _{ \alpha \in \lambda } $, respectively,
 $( O _ \alpha ) _{ \alpha \in \lambda } $, are all distinct.
\end{theorem}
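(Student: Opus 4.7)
My plan is to establish the cycle
(a) $\Rightarrow$ (b) $\Rightarrow$ (c) $\Rightarrow$ (d) $\Leftrightarrow$ (e) $\Rightarrow$ (a),
together with (a) $\Rightarrow$ (f) $\Rightarrow$ (b). Throughout, the regularity of $\lambda$ will be used either to control $\sup Z$ when $|Z|<\lambda$ or to convert ``cofinal in $\lambda$'' into ``of cardinality $\lambda$''; without regularity none of these reductions would go through.

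\textbf{The routine implications.} For (a) $\Rightarrow$ (b), given a decreasing sequence $(C_\alpha)$ and $Z\subseteq\lambda$ with $|Z|<\lambda$, regularity yields $\sup Z<\lambda$, and since the sequence decreases we have $\bigcap_{\alpha\in Z}C_\alpha\supseteq C_{\sup Z}$, which contains a nonempty open set by hypothesis; then apply (a). The step (b) $\Rightarrow$ (c) is trivial. For (c) $\Rightarrow$ (d), I take $C_\alpha=\overline{\bigcup_{\beta\geq\alpha}O_\beta}$, a decreasing sequence of closures of open sets each containing $O_\alpha\neq\emptyset$; any $x\in\bigcap_\alpha C_\alpha$ supplied by (c) witnesses (d), since every neighbourhood $U$ of $x$ meets $\bigcup_{\beta\geq\alpha}O_\beta$ for each $\alpha$, so $\{\beta<\lambda:U\cap O_\beta\neq\emptyset\}$ is cofinal and hence of cardinality $\lambda$. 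The step (e) $\Rightarrow$ (d) is immediate from the uniformity of $D$. For (d) $\Rightarrow$ (a), given $(C_\alpha)$ and the open sets $O_Z$ from the hypothesis of (a), I set $O'_\alpha:=O_{\{\beta:\beta\leq\alpha\}}$; the $x$ furnished by (d) must lie in every $C_{\alpha_0}$, since otherwise $U=X\setminus C_{\alpha_0}$ would be a neighbourhood of $x$ disjoint from $O'_\alpha\subseteq C_{\alpha_0}$ for all $\alpha\geq\alpha_0$, forcing $\{\alpha:U\cap O'_\alpha\neq\emptyset\}$ to have cardinality at most $|\alpha_0|<\lambda$. For (a) $\Rightarrow$ (f), the open-cover reformulation in Definition \ref{fcpnop} yields $Z$ with $|Z|<\lambda$ such that $\bigcup_{\alpha\in Z}O_\alpha$ is dense; since $(O_\alpha)$ is increasing and $\sup Z<\lambda$ by regularity, some $O_\gamma$ with $\gamma<\lambda$ contains this union and is therefore dense. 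Finally (f) $\Rightarrow$ (b) is the contrapositive: when $\bigcap_\alpha C_\alpha=\emptyset$ the complements form an increasing open cover, and density of any $X\setminus C_\alpha$ contradicts $C_\alpha$ having nonempty interior.

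\textbf{The main obstacle and the distinct coda.} The delicate step is (d) $\Rightarrow$ (e). I collect the sets $A_U:=\{\alpha<\lambda:U\cap O_\alpha\neq\emptyset\}$, as $U$ ranges over neighbourhoods of $x$; these form a filter base on $\lambda$ whose members all have cardinality $\lambda$. Because $\lambda$ is regular, this family is compatible with the co-${<}\lambda$ filter on $\lambda$ (removing fewer than $\lambda$ indices from a set of cardinality $\lambda$ leaves a set of cardinality $\lambda$), so their union has the finite intersection property. Extending to any ultrafilter $D$ on $\lambda$ produces a uniform ultrafilter with $x$ as a $D$-limit of $(O_\alpha)$ in the sense of Definition \ref{dlim}; this is the step that genuinely requires regularity (to get uniformity for free), and is where I expect most care. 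The ``distinct'' coda follows by a regularity-of-$\lambda$ pigeonhole: if a given sequence takes fewer than $\lambda$ distinct values, then one value is attained on an index set of cardinality $\lambda$ and the corresponding conclusion (e.g.\ a stabilized intersection in (b)/(c), a suitable $x$ in $O$ for (d)/(e), a dense $O_{\alpha_0}$ in (f)) becomes immediate; otherwise one passes to the $\lambda$-subsequence of first occurrences, applies the distinct form, and transfers the conclusion back along the inclusion of index sets.
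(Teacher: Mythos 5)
Your proof is correct, but it takes a different route from the paper: the paper does not prove this theorem directly at all, instead invoking \cite[Theorem 4.4]{LiF} with $\mathcal F=\mathcal O$ (and \cite[Propositions 3.3(a) and 4.1]{LiF} for the ``distinct'' coda), i.e.\ it leans on the abstract $\mathcal F$-$[\lambda,\lambda]$-compactness machinery of Section \ref{abstrfr}. Your self-contained cycle (a) $\Rightarrow$ (b) $\Rightarrow$ (c) $\Rightarrow$ (d) $\Leftrightarrow$ (e) $\Rightarrow$ (a), with (a) $\Rightarrow$ (f) $\Rightarrow$ (b), checks out step by step, and the key moves are exactly where you locate them: $\sup Z<\lambda$ in (a) $\Rightarrow$ (b) and (a) $\Rightarrow$ (f), the tail-closures $C_\alpha=\overline{\bigcup_{\beta\ge\alpha}O_\beta}$ in (c) $\Rightarrow$ (d), the initial segments $O'_\alpha=O_{\{\beta:\beta\le\alpha\}}$ in (d) $\Rightarrow$ (a), and adjoining the co-${<}\lambda$ filter to the filter base $\{A_U\}$ in (d) $\Rightarrow$ (e). What your approach buys is transparency and independence from the companion paper; what the paper's approach buys is generality, since the cited result covers arbitrary families $\mathcal F$ (and your argument would in fact survive that generalization verbatim, replacing ``nonempty open set'' by ``member of $\mathcal F$''). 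Two cosmetic points: in (d) $\Rightarrow$ (e) the compatibility of a family of sets of cardinality $\lambda$ with the co-${<}\lambda$ filter does not actually use regularity (removing a set of size ${<}\lambda$ from a set of size $\lambda$ always leaves a set of size $\lambda$); regularity is genuinely used earlier, to get $|A_U|=\lambda$ from cofinality in (c) $\Rightarrow$ (d). And in the ``distinct'' coda your first-occurrence reduction handles (b), (c), (d), (e), (f), but for (a), where the sequence is not monotone, the right reduction is simply to pass to an enumeration of the distinct values (choosing one representative index per value), noting that if there are fewer than $\lambda$ of them the conclusion is already contained in the hypothesis; this is easy but is a slightly different argument from the one you sketch.
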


\begin{proof}
By \cite[Theorem 4.4]{LiF},
taking $\mathcal F$ there to be the family $\mathcal O$  of all the 
nonempty open sets of $X$.

Since $\lambda$ is regular, the last statement is trivial,
as far as conditions (b), (c) and (f) are concerned. It follows from
\cite[Proposition 3.3(a)]{LiF} in case (d).
Then apply \cite[Proposition 4.1 ]{LiF} in order to get
(e).  
\end{proof}

\begin{remark} \label{notk}
At this point, we should mention a significant difference
between 
$\mathcal O$-$ [ \mu , \lambda ]$-compactness and
$ [ \mu, \lambda ]$-compactness.

It is true that a topological
space  is $ [ \mu, \lambda ]$-compact if and only if
it is $ [ \kappa , \kappa  ]$-compact, for every $\kappa$ such that 
$\mu \leq \kappa \leq \lambda $. 
Though simple, the above equivalence has proved very useful
in many circumstances. See, e. g., \cite{topappl}. 

It is trivial that every $\mathcal O$-$ [ \mu , \lambda ]$-compact space 
is $\mathcal O$-$ [ \mu' , \lambda' ]$-compact, whenever
$\mu \leq \mu' \leq \lambda' \leq \lambda $. In particular, 
every $\mathcal O$-$ [ \mu , \lambda ]$-compact space
is $\mathcal O$-$ [ \kappa , \kappa  ]$-compact, for every $\kappa$ such that 
$\mu \leq \kappa \leq \lambda $.

On the contrary, the condition of being
 $\mathcal O$-$ [ \kappa , \kappa  ]$-compact, for every $\kappa$ such that 
$\mu \leq \kappa \leq \lambda $, 
is not always a sufficient condition in order to get
$\mathcal O$-$ [ \mu , \lambda ]$-compactness.
 See Remark \ref{gf}. This fact limits
the usefulness of Theorem \ref{equivcpnrego} in the present context.
\end{remark}

\section{A characterization by means of ultrafilters} \label{uf} 

The first theorem in this section, Theorem \ref{ufo}, furnishes a characterization of $\mathcal O$-$ [ \mu, \lambda ]$-compactness by means of the existence of $D$-limit points of ultrafilters.
This characterization is the key for the study of the connections between $\mathcal O$-$ [ \mu, \lambda ]$-compactness and $D$-\brfrt pseudocompactness, for $D$ a
$( \mu, \lambda  )$-regular ultrafilter and shall be used in the next section in connection with properties of products.

\begin{definition} \label{dlim}
 Suppose that  $D$ is an ultrafilter over some set $I$, and
$X$ is a topological space.
If
$(Y_i)_{i \in I}$ is a  sequence  
of subsets of $X$, then $x \in X$
 is called a $D$-\emph{limit point} of $(Y_i)_{i \in I}$
if and only if 
$ \{ i \in I \mid  Y_i \cap U \not= \emptyset \} \in D$, for every neighborhood
$U$ of $x$ in $X$.
The  notion of a $D$-limit point is due to \cite[Definition 4.1]{GS}
for non-principal ultrafilters over $ \omega $,
and appears in \cite{GF} for uniform ultrafilters over arbitrary cardinals. 
 \end{definition}   

We say that an ultrafilter $D$ over  $S_ \mu ( \lambda )$
\emph{covers} $\lambda$ if and only if,
for every $\alpha \in \lambda $,
it happens that
$\{ Z \in S_ \mu ( \lambda ) \mid \alpha \in Z \} \in D$.
This notion is connected with $(\mu, \lambda )$-regularity,
as we shall see in Definition \ref{rk}. 

\begin{theorem} \label{ufo}
For every topological space $X$ and infinite cardinals $\lambda$ and $\mu$, 
the following are equivalent.

\begin{enumerate} 
\item
$X$ is
$\mathcal O$-$ [ \mu, \lambda ]$-compact.

\item
For every  sequence 
$\{ O_Z \mid Z \in S_ \mu ( \lambda ) \}$
of nonempty open sets of $X$,
there exists an ultrafilter $D$ over 
$S _\mu ( \lambda )$ which covers $\lambda$ and 
such that 
 $\{ O_Z \mid Z \in S_ \mu ( \lambda ) \}$
has a $D$-limit point.
\end{enumerate}
 \end{theorem}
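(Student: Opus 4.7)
The plan is to prove the two implications separately, in each direction invoking the most convenient equivalent formulation of $\mathcal O$-$[\mu,\lambda]$-compactness already established in Proposition \ref{equivo}.

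For the direction (2) $\Rightarrow$ (1), I would verify clause (4) of Proposition \ref{equivo}. Given a sequence $\{O_Z \mid Z \in S_\mu(\lambda)\}$ of nonempty open sets, apply (2) to obtain an ultrafilter $D$ over $S_\mu(\lambda)$ covering $\lambda$ together with a $D$-limit point $x$. To show $x \in \overline{\bigcup \{O_Z \mid \alpha \in Z\}}$ for each $\alpha \in \lambda$, fix any neighborhood $U$ of $x$. Then $\{Z \mid \alpha \in Z\} \in D$ (because $D$ covers $\lambda$) and $\{Z \mid O_Z \cap U \neq \emptyset\} \in D$ (because $x$ is a $D$-limit), so their intersection lies in $D$ and is in particular nonempty. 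Any $Z$ in it satisfies $\alpha \in Z$ and $O_Z \cap U \neq \emptyset$, so $U$ meets the union. Since $\alpha$ was arbitrary, (4) is verified.

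For (1) $\Rightarrow$ (2), I would invoke clause (5) of Proposition \ref{equivo}. Given $\{O_Z \mid Z \in S_\mu(\lambda)\}$, set $Q_W = \bigcup \{O_Z \mid Z \in S_\mu(\lambda),\ Z \supseteq W\}$ for each finite $W \subseteq \lambda$, and pick some $x \in \bigcap_W \overline{Q_W}$. Consider the family $\mathcal E$ consisting of the sets $E_\alpha = \{Z \in S_\mu(\lambda) \mid \alpha \in Z\}$ for $\alpha \in \lambda$, together with $F_U = \{Z \in S_\mu(\lambda) \mid O_Z \cap U \neq \emptyset\}$ for $U$ a neighborhood of $x$. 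The heart of the argument is that $\mathcal E$ has the finite intersection property: given $\alpha_1,\dots,\alpha_n \in \lambda$ and neighborhoods $U_1,\dots,U_m$ of $x$, put $W = \{\alpha_1,\dots,\alpha_n\}$ and $U = U_1 \cap \cdots \cap U_m$, which is again a neighborhood of $x$; since $x \in \overline{Q_W}$, there exists $Z \supseteq W$ with $O_Z \cap U \neq \emptyset$, and this $Z$ lies in every $E_{\alpha_i}$ and every $F_{U_j}$. Extend $\mathcal E$ to an ultrafilter $D$ over $S_\mu(\lambda)$; by construction $D$ covers $\lambda$ and $x$ is a $D$-limit point of $(O_Z)_Z$, giving (2).

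There is no real obstacle here: once one selects clause (4) for one direction and clause (5) for the other, the argument reduces to a routine ultrafilter extension by finite intersection property. The only mildly delicate bookkeeping is noticing that finitely many neighborhoods collapse to a single neighborhood by intersection, so that the FIP check matches exactly the content of clause (5) — finitely many elements of $\lambda$ and a single neighborhood of $x$.
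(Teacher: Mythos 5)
Your proposal is correct and follows essentially the same route as the paper's proof: the (1) $\Rightarrow$ (2) direction is the identical argument via clause (5) of Proposition \ref{equivo} and a finite-intersection-property extension, and the (2) $\Rightarrow$ (1) direction verifies clause (4) exactly as the paper does, only phrased directly rather than by contradiction.
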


\begin{proof} 
(1) $\Rightarrow $  (2)
Suppose that $\{ O_Z \mid Z \in S_ \mu ( \lambda ) \}$
is a  sequence  of nonempty open sets of $X$.
For every finite subset $W$ of $\lambda$,
let 
$Q_W= 
  \bigcup \{ O_Z \mid Z \in S_ \mu ( \lambda ) \text{ and } Z \supseteq
W\}$. 
By 
$\mathcal O$-$ [ \mu, \lambda ]$-compactness, and
Condition (5)
in Proposition \ref{equivo}, we have that
$\bigcap \{ \overline{Q_W} \mid W \text{ a finite subset of } \lambda  \}
\brfr
\not= \emptyset $.
Suppose that 
$x \in \bigcap \{ \overline{Q_W} \mid W \text{ a finite subset of } \lambda  \}$.

For every neighborhood $U$ of $x$ in $X$, let 
$A_U= \{ Z \in S_ \mu ( \lambda ) \mid U \cap O_Z \not= \emptyset\}  $. 
For every $\alpha \in \lambda $, let
 $[ \alpha ) =\{ Z \in S_ \mu ( \lambda ) \mid \alpha \in Z \}$.
We are going to show that the family
$\mathcal A= \{ [ \alpha )  \mid  \alpha \in \lambda \}
\cup \{ A_U \mid $U$ \text{ a neighborhood of } x \text{ in } X \}  $
has the finite intersection property.

Indeed, let $U_1, \dots, U_n$ be neighborhoods of $x$,
and $\alpha_1, \dots, \alpha _m$ be elements of $\lambda$.
Let $U= U_1 \cap \dots \cap U_n$,
$W= \{ \alpha_1, \dots, \alpha _m\} $,
and   $[W) =  [\alpha_1) \cap \dots \cap [\alpha _m) = 
\{ Z \in S_ \mu ( \lambda ) \mid  Z \supseteq W\}$.
Since $x \in  \overline{Q_W}$, we get that
$U \cap Q_W \not= \emptyset $, that is,
 $U \cap O_Z \not= \emptyset $, for some 
$Z \in S_ \mu ( \lambda )$ with 
$Z \supseteq W$. 
Hence $Z \in A_U$, and also
  $Z \in A _{U_1} $, \dots, 
$Z \in A _{U_n}$, since
$U_1 \supseteq U$, \dots,  
$U_n \supseteq U$.
In conclusion, 
$Z \in A _{U_1} \cap \dots \cap A _{U_n} \cap  [\alpha_1) \cap \dots \cap [\alpha _m)  $, hence the above intersection is not empty.

We have showed that  
$\mathcal A$
has the finite intersection property.
Hence $\mathcal A$ can be extended to some ultrafilter 
$D$ over $S_ \mu ( \lambda )$. 
By construction, 
$[ \alpha )   \in D$,
for every $\alpha \in \lambda $,
hence $D$ covers $\lambda$.
Again by construction, 
$A_U \in D$, 
for every 
 neighborhood $U$ of $x$ in $X$, and this means exactly that 
$x$ is a $D$-limit point of
 $\{ O_Z \mid Z \in S_ \mu ( \lambda ) \}$.
Thus, (2) is proved.

In order to prove (2) $\Rightarrow $  (1),
it is sufficient to prove that (2) implies Condition (4)
in Proposition \ref{equivo}.
 Let $\{ O_Z \mid Z \in S_ \mu ( \lambda ) \}$
be a  sequence  of nonempty open sets of $X$.
Letting
$C_ \alpha =
 \overline{  \bigcup  \{ O_Z \mid  Z \in S_ \mu ( \lambda ),  \alpha \in Z \}}$, for $\alpha \in \lambda $,
we need too show that
$\bigcap _{ \alpha \in \lambda } C_ \alpha \not= \emptyset $. 
Let $D$ be an ultrafilter as given by (2), and suppose that
$x$ is a $D$-limit point of $\{ O_Z \mid Z \in S_ \mu ( \lambda ) \}$.
We are going to show that $x \in \bigcap _{ \alpha \in \lambda } C_ \alpha $.
Suppose by contradiction that, for some $\alpha \in \lambda $, it happens that $x \not\in  C_ \alpha$. Since $C_\alpha$ is closed, 
$x$ has some neighborhood $U$ disjoint from $C_\alpha$.  
Notice that, if $Z \in S_ \mu ( \lambda )$  and $\alpha \in Z$,
then $C_ \alpha \supseteq O_Z $. Hence  
$\{ Z \in S_ \mu ( \lambda ) \mid    U \cap O_Z \not= \emptyset \}
\cap [ \alpha ) = \emptyset $,
hence $\{ Z \in S_ \mu ( \lambda ) \mid    U \cap O_Z \not= \emptyset \} \not \in D$,
since $D$ is an ultrafilter, and  $[ \alpha ) \in D$ by assumption, since $D$ 
is supposed to cover $\lambda$. But $\{ Z \in S_ \mu ( \lambda ) \mid    U \cap O_Z \not= \emptyset \} \not \in D$ contradicts the assumption that
$x$ is a $D$-limit point of
$\{ O_Z \mid Z \in S_ \mu ( \lambda ) \}$.
Hence $x \in \bigcap _{ \alpha \in \lambda } C_ \alpha $,
thus $\bigcap _{ \alpha \in \lambda } C_ \alpha \not= \emptyset $,
 and the proof is complete.  
\end{proof} 

\begin{remark} \label{caic} 
Theorem \ref{ufo} is inspired by results by X. Caicedo from his seminal paper \cite{C}. See also \cite{Cprepr}.
Caicedo proved  results similar to 
Theorem \ref{ufo} 
 for $ [ \mu, \lambda ]$-compactness. The result
analogous to the implication (1) $\Rightarrow $  (2) in Theorem \ref{ufo}
is Lemma 3.3 (i) in \cite{C}.
A common generalization and strengthening of both Theorem \ref{ufo} 
and \cite[Lemmata 3.1 and 3.2]{C} holds. See Theorem \ref{equivfdcpn} 
(1) $\Rightarrow $  (7) below.
 
Notice that, because of the well known result about
$ [ \mu, \lambda ]$-\brfrt compactness mentioned in Remark \ref{notk},
essentially all applications of results in \cite{C} can be obtained using only 
the particular case $\lambda= \mu$ of   \cite[Lemmata 3.1 and 3.2]{C}.
However, such a reduction is not   possible in the case of 
$\mathcal O$-$ [ \mu, \lambda ]$-compactness, by Remark \ref{gf}.
Hence it is necessary to deal with the more general case in which $\lambda \not= \mu$
is allowed.  
The idea from
\cite{Cprepr,C}
of treating the full general case
  is thus well-justified
\end{remark}

\begin{definition} \label{dpseudocp}   
If $D$ is an ultrafilter over $I$, then a topological space $X$ is said to be $D$-\emph{pseudocompact} (\cite{GS,GF})
 if and only if every  sequence 
$(O_i)_{i \in I}$ 
of nonempty open subsets of $X$
has some $D$-limit point in $X$. 
\end{definition}

\begin{definition} \label{rk}    
An ultrafilter $D$ over some set $I$ is
said to be \emph{$(\mu ,\lambda )$-regular} 
if and only if
there is  a function $f:I\to S_\mu(\lambda )$
such that
$\{i\in I|\alpha\in f(i) \}\in D $, for every $\alpha\in\lambda $.
See, e. g., \cite{mru} for equivalent definitions and for a survey of results on  $( \mu, \lambda  )$-regular ultrafilters.

If $D$ is an ultrafilter over $I$, 
and $f:I \to J$ is a function,
the ultrafilter $f(D)$ over $J$ is defined by
the following clause:
$Z \in f(D)$ if and only if $f ^{-1}(Z) \in D $.

With the above notation, it is trivial to see that
$D$ over $I$ is  $( \mu, \lambda  )$-regular
if and only if there exists some function
$f:I \to S_\mu(\lambda )$ such that 
$f(D)$ covers $\lambda$. 

In passing, let us mention that the above definitions involve the so-called
 Rudin-Keisler order. 
If $D$ and $E$ are two ultrafilters, respectively
over $I$ and $J$, then
$E$ is said to be less than or equal to $D$ in the \emph{Rudin-Keisler} 
(pre-) order, $E \leq _{\rk} D $ for short, if and only if
there exists some   function
$f:I \to J$ such that     
  $E= f(D)$.
If both  $E \leq _{\rk} D $
and  $D \leq _{\rk} E $, then
$E$ and $D$ are said to be
(Rudin-Keisler) \emph{equivalent}. 
 \end{definition}

The next fact  is trivial, but very useful.

 \begin{fact} \label{proj} 
If $D$ is an ultrafilter over $I$,
$X$ is a $D$-pseudocompact topological space, 
and $f:I\to J$ is a function, then
$X$ is $f(D)$-pseudocompact.
\end{fact}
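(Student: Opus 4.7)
The plan is to prove the fact by pulling back a sequence from $J$ to $I$ along $f$, applying $D$-pseudocompactness in the source, and then checking that the resulting $D$-limit point automatically serves as an $f(D)$-limit point for the original sequence. Since this is essentially a bookkeeping exercise with the definitions of $f(D)$ and of $D$-limit point, I do not anticipate any serious obstacle; the only thing to get right is the comparison of the two index sets.

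In detail, suppose we are given a sequence $(O_j)_{j \in J}$ of nonempty open subsets of $X$, and we wish to produce an $f(D)$-limit point. Define a new sequence $(O'_i)_{i \in I}$ indexed by $I$ by setting $O'_i = O_{f(i)}$. Each $O'_i$ is nonempty and open, so by $D$-pseudocompactness of $X$ there exists some $x \in X$ which is a $D$-limit point of $(O'_i)_{i \in I}$. Explicitly, this means that for every neighborhood $U$ of $x$ in $X$, we have $\{ i \in I \mid U \cap O'_i \neq \emptyset \} \in D$.

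I then claim that the same $x$ is an $f(D)$-limit point of $(O_j)_{j \in J}$. Fix a neighborhood $U$ of $x$ and set $Z_U = \{ j \in J \mid U \cap O_j \neq \emptyset \}$. The key identity is
\[
f^{-1}(Z_U) = \{ i \in I \mid f(i) \in Z_U \} = \{ i \in I \mid U \cap O_{f(i)} \neq \emptyset \} = \{ i \in I \mid U \cap O'_i \neq \emptyset \},
\]
and the rightmost set belongs to $D$ by choice of $x$. By the definition of $f(D)$ (namely $Z \in f(D)$ iff $f^{-1}(Z) \in D$), this gives $Z_U \in f(D)$, which is exactly the condition for $x$ to be an $f(D)$-limit point of $(O_j)_{j \in J}$. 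Since the sequence $(O_j)_{j \in J}$ was arbitrary, $X$ is $f(D)$-pseudocompact.
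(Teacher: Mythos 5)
Your proof is correct, and since the paper states Fact \ref{proj} without proof (labelling it ``trivial''), your pull-back argument --- setting $O'_i = O_{f(i)}$, taking a $D$-limit point, and using the identity $f^{-1}(\{\,j \mid U \cap O_j \neq \emptyset\,\}) = \{\,i \mid U \cap O'_i \neq \emptyset\,\}$ --- is exactly the intended verification. Nothing is missing.
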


\begin{corollary} \label{regimpcpn}
Suppose that  $D$ is a $( \mu, \lambda  )$-regular ultrafilter. 

If $X$ is a $D$-pseudocompact topological space,
then $X$ is
$\mathcal O$-$ [ \mu, \lambda ]$-compact.

More generally, if $(X_j) _{j \in J} $
is a  sequence  of  
$D$-pseudocompact topological spaces,
then the Tychonoff product 
$ \prod _{j \in J} X_j $  is
$\mathcal O$-$ [ \mu, \lambda ]$-compact.
 \end{corollary}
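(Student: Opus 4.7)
The plan is to invoke Theorem \ref{ufo} using the ultrafilter $E = f(D)$, where $f$ is the regularity witness for $D$. Specifically, by Definition \ref{rk} applied to the $(\mu,\lambda)$-regular ultrafilter $D$ over $I$, there is a function $f : I \to S_\mu(\lambda)$ such that $f(D)$ covers $\lambda$. Since Theorem \ref{ufo} requires us to produce, for every sequence $\{O_Z \mid Z \in S_\mu(\lambda)\}$ of nonempty open sets of $X$, an ultrafilter over $S_\mu(\lambda)$ that covers $\lambda$ and has the sequence as a $D$-limit sequence, the natural candidate is $E = f(D)$.

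To supply the limit point, first apply Fact \ref{proj}: from $D$-pseudocompactness of $X$ and the function $f : I \to S_\mu(\lambda)$, we conclude that $X$ is $f(D)$-pseudocompact. Then the $S_\mu(\lambda)$-indexed sequence $\{O_Z \mid Z \in S_\mu(\lambda)\}$ has some $f(D)$-limit point $x \in X$, directly from Definition \ref{dpseudocp}. Together with the already noted fact that $f(D)$ covers $\lambda$, this verifies clause (2) of Theorem \ref{ufo}, and so $X$ is $\mathcal{O}$-$[\mu,\lambda]$-compact.

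For the product assertion, the cleanest route is to appeal to the known productivity of $D$-pseudocompactness (mentioned in the introduction): if each $X_j$ is $D$-pseudocompact, then $\prod_{j \in J} X_j$ is $D$-pseudocompact as well. Applying the first half of the corollary to this product yields $\mathcal{O}$-$[\mu,\lambda]$-compactness of $\prod_{j \in J} X_j$.

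There is no real obstacle; the argument is a short composition of Theorem \ref{ufo} with Fact \ref{proj}. The only point that deserves mild care is the verification that $f(D)$ does cover $\lambda$, which is precisely the content of the ``only if'' direction of the equivalent reformulation of $(\mu,\lambda)$-regularity given at the end of Definition \ref{rk}; and, for the product half, the reliance on the (cited) productivity of $D$-pseudocompactness, which is the essential reason why this corollary is useful in the non-productive $\mathcal{O}$-$[\mu,\lambda]$-compact setting.
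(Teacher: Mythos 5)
Your proposal is correct and follows exactly the paper's own route: obtain $f:I\to S_\mu(\lambda)$ with $f(D)$ covering $\lambda$ from $(\mu,\lambda)$-regularity, use Fact \ref{proj} to get $f(D)$-pseudocompactness, apply Theorem \ref{ufo}(2)$\Rightarrow$(1) with the single ultrafilter $f(D)$ serving all sequences, and derive the product statement from the cited productivity of $D$-pseudocompactness. No gaps.
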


\begin{proof} 
By $( \mu, \lambda  )$-regularity, there is 
$f:I \to S_\mu(\lambda )$ such that 
$f(D)$ covers $\lambda$. 
By Fact \ref{proj}, $X$ is 
$f(D)$-pseudocompact, hence $\mathcal O$-$ [ \mu, \lambda ]$-compactness of $X$ 
follows from Theorem \ref{ufo} with $f(D)$ in place of $D$.
Notice that here $f(D)$ works ``uniformly'' for every  sequence ,
while, in the statement of Theorem \ref{ufo}(2), the ultrafilter, in general, depends on the  sequence.

The last statement follows from the known fact (\cite[Theorem 4.3]{GS})
that $D$-pseudocompactness is preserved under taking products.
\end{proof}  
 
A result analogous to Corollary \ref{regimpcpn} for
$[\mu, \lambda ]$-compactness is proved in \cite[Lemma 3.1]{C}.   

We now present a nice characterization of
$D$-pseudocompactness.

\begin{theorem} \label{equivodcpn} 
Suppose that $D$ is an ultrafilter over some set $I$,
and $X$ is a topological space.
Then the following are equivalent.

\begin{enumerate} 
\item 
$X$ is $D$-pseudocompact.

\item
For every sequence 
$\{ O_i \mid i \in I \}$
of nonempty open sets of $X$, if, for 
$Z \in D$, we put
$C_Z= \overline{ \bigcup _{i \in Z} O_i} $, then
we have that 
$\bigcap _{Z \in D} C_Z \not= \emptyset  $.

\item
Whenever $(C_Z) _{Z \in D} $ is a  sequence  of closed sets of $X$
with the property that, for every $i \in I$, 
 $\bigcap _{i\in Z} C_Z $ contains some nonempty open set of $X$, then  
$\bigcap _{Z \in D} C_Z \not= \emptyset  $.

\item
For every open cover $(Q_Z) _{Z \in D} $ of $X$,
there is some $i \in I$ such that  
$\bigcup _{i\in Z} Q_Z $
is dense in $X$.

\item
For every sequence 
$\{ C_i \mid i \in I \}$
of closed sets of $X$, 
such that each
$C_i$ is properly contained in $X$, 
if, for 
$Z \in D$, we let 
$Q_Z$ be the interior of
$ \bigcap _{i \in Z} C_i $, then
we have that 
$ (Q_Z) _{Z \in D}$
is not a cover of $X$.
\end{enumerate}  
 \end{theorem}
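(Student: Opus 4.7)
The plan is to follow the pattern of the proof of Proposition~\ref{equivo}, establishing the cycle (1) $\Rightarrow$ (2) $\Rightarrow$ (3) $\Rightarrow$ (1) directly, and then deducing (3) $\Leftrightarrow$ (4) and (2) $\Leftrightarrow$ (5) by passing to complements. For the latter one observes that, writing $O_i = X \setminus C_i$, the interior of $\bigcap_{i \in Z} C_i$ equals $X \setminus \overline{\bigcup_{i \in Z} O_i}$, so that ``$(Q_Z)_{Z \in D}$ fails to cover $X$'' corresponds exactly to ``$\bigcap_{Z \in D} \overline{\bigcup_{i \in Z} O_i} \not= \emptyset$''; similarly, $\bigcup_{i \in Z} Q_Z$ is dense in $X$ precisely when the interior of $\bigcap_{i \in Z}(X\setminus Q_Z)$ is empty.

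For (1) $\Rightarrow$ (2), given a sequence $(O_i)_{i \in I}$ of nonempty open sets, let $x$ be a $D$-limit point, furnished by $D$-pseudocompactness. For each neighborhood $U$ of $x$ and each $Z \in D$, the set $\{ i \in I \mid U \cap O_i \not= \emptyset \}$ belongs to $D$ and therefore meets $Z$, so $U$ intersects $\bigcup_{i \in Z} O_i$; this gives $x \in \overline{\bigcup_{i \in Z} O_i} = C_Z$. For (2) $\Rightarrow$ (3), given $(C_Z)_{Z \in D}$ as in the hypothesis of (3), I would choose, for each $i \in I$, a nonempty open set $O_i \subseteq \bigcap_{Z \ni i} C_Z$; then $i \in Z$ forces $O_i \subseteq C_Z$, hence $\overline{\bigcup_{i \in Z} O_i} \subseteq C_Z$ since $C_Z$ is closed, and applying (2) to $(O_i)_{i \in I}$ produces a point in $\bigcap_{Z \in D} C_Z$.

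For (3) $\Rightarrow$ (1), given a sequence $(O_i)_{i \in I}$ of nonempty open sets, I would set $C_Z = \overline{\bigcup_{i \in Z} O_i}$; the hypothesis of (3) is satisfied because $\bigcap_{Z \ni i} C_Z \supseteq O_i$, a nonempty open set. Picking $x \in \bigcap_{Z \in D} C_Z$, I verify that $x$ is a $D$-limit point of $(O_i)_{i \in I}$: for any neighborhood $U$ of $x$ the set $A_U = \{ i \in I \mid U \cap O_i \not= \emptyset\}$ meets every $Z \in D$ (since $x \in C_Z$ forces $U$ to intersect $\bigcup_{i \in Z} O_i$), and because $D$ is an ultrafilter, a set meeting every member of $D$ must itself lie in $D$, whence $A_U \in D$.

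I do not foresee a serious obstacle: the argument is essentially a routine adaptation of Proposition~\ref{equivo} to the ultrafilter setting. The only step requiring a brief moment of care is the final ultrafilter-closure observation in (3) $\Rightarrow$ (1) (a set meeting every element of an ultrafilter belongs to that ultrafilter), while the open/closed dualities underlying (3) $\Leftrightarrow$ (4) and (2) $\Leftrightarrow$ (5) need to be spelled out but present no real difficulty.
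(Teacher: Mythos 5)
Your proposal is correct and follows essentially the same route as the paper: the same cycle (1) $\Rightarrow$ (2) $\Rightarrow$ (3) $\Rightarrow$ (1) with the same auxiliary sets ($C'_Z=\overline{\bigcup_{i\in Z}O_i}$ in (2) $\Rightarrow$ (3), and the closure argument for the $D$-limit point in (3) $\Rightarrow$ (1), where the paper phrases your ``meets every member of $D$'' observation contrapositively via the complement $Z=\{i\mid U\cap O_i=\emptyset\}$), and the same complementation for (3) $\Leftrightarrow$ (4) and (2) $\Leftrightarrow$ (5).
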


 \begin{proof} 
(1) $\Rightarrow $  (2) 
By $D$-pseudocompactness, the sequence $\{ O_i \mid i \in I \}$  
has some $D$-limit point $x$ in $X$, that is,
$\{ i \in I \mid U \cap O_i \not= \emptyset \} \in D$, 
for every neighborhood $U$ of $x$ in $X$.

We are going to show that 
$ x \in \bigcap _{Z \in D} C_Z $.
Indeed, let $Z$ be any set in $D$. 
If $U$ is a neighborhood of $x$, then 
 $Z'=Z \cap \{ i \in I \mid U \cap O_i \not= \emptyset \} $ is still in $ D$, thus is
nonempty. Let $i \in Z'$. Then
$U \cap O_i \not= \emptyset$, and 
$C_Z \supseteq O_i$, since $i \in Z$. Hence
$U \cap C_Z \not= \emptyset$.
Since the above argument works for every neighborhood $U$ of
$x$, we have that
 $x \in C_Z$, since $C_Z$ is a closed set.

We have showed that 
$x \in C_Z$, for every
$Z \in D$,
hence 
$ x \in \bigcap _{Z \in D} C_Z $.

(2) $\Rightarrow $  (3)
For every $i \in I$, let $O_i$ be some nonempty open set of $X$ such that  
 $\bigcap _{i\in Z} C_Z \supseteq O_i$.
For every $Z \in D$,  put
$C'_Z= \overline{ \bigcup _{i \in Z} O_i} $.
By Clause (2), we have that 
$\bigcap _{Z \in D} C'_Z \not= \emptyset  $.
Since, for every $i \in Z$,
 $C_Z \supseteq O_i$,
we have that $C_Z \supseteq C'_Z$, for every $Z \in D$.   
Hence, 
$\bigcap _{Z \in D} C_Z \supseteq \bigcap _{Z \in D} C'_Z \not= \emptyset  $.

(3) $\Rightarrow $  (1) Suppose that $(O_i) _{i \in I} $
is a sequence of nonempty open sets of $X$.
For $Z \in D$, let 
$C_Z= \overline{ \bigcup _{i \in Z}  O_i} $.  
Hence, for every $i \in Z$,
$C_Z \supseteq   O_i$, and, for every $i \in I$, 
 $\bigcap _{i\in Z} C_Z $ contains the nonempty open set $  O_i$. 
 
By (3), there is
some $x \in X$
such that  
$ x \in \bigcap _{Z \in D} C_Z$.
It is enough to show that $x$ is a $D$-limit point of  $(O_i) _{i \in I} $.
If not, $x$ has some neighborhood $U$ such that  
$\{ i \in I \mid U \cap O_i \not= \emptyset \} \not\in D$, that is,
 $\{ i \in I \mid U \cap O_i = \emptyset \} \in D$. 
Letting $Z=\{ i \in I \mid U \cap O_i = \emptyset \}$,
we have that $U \cap \bigcup _{i \in Z} O_i = \emptyset  $,
but this contradicts 
 $ x \in C_Z= \overline{ \bigcup _{i \in Z}  O_i} $.

(3) $\Leftrightarrow $  (4) and
(2) $\Leftrightarrow $  (5) are obtained by considering complements.
\end{proof}

\section{Theorems about products} \label{prod} 

In this section we consider, for a product space $ \prod _{j \in J} X_j$, 
a variant of $\mathcal O $-$ [ \mu, \lambda ]$-compactness,
a variant which takes into account 
all the open sets in  the box topology   on
the set $ \prod _{j \in J} X_j$.
This notion shall be used in order to provide
 a characterization of all 
those spaces $X$ which are $D$-pseudocompact,
for some $( \mu, \lambda  )$-regular ultrafilter $D$
(Theorem \ref{fprodo}).

We shall need to consider the \emph{set}  $ \prod _{j \in J} X_j$
endowed  both with the Tychonoff topology and with the 
\emph{box} topology. A base for the latter topology is given by \emph{all}
the products  $ \prod _{j \in J} O_j$, each $ O_j$ being an open set 
of $X_j$. When we write $ \prod _{j \in J} X_j$, we shall always assume that the product is endowed with the Tychonoff topology, while 
$ \bigbox _{j \in J} X_j$ shall denote the product endowed with the box topology.

\begin{definition} \label{fcpnopbox}
Suppose that $(X_j) _{j\in J} $
is a  sequence  of topological spaces. 
We say that the topological space  $ \prod _{j \in J} X_j$ is
$\mathcal O ^{\Box} $-$ [ \mu, \lambda ]$-\emph{compact}
if and only if the following holds.

For every  sequence  $( C _ \alpha ) _{ \alpha \in \lambda } $
 of closed sets of $ \prod _{j \in J} X_j$, if,
for every $Z \subseteq \lambda $ with $ |Z|< \mu$,
there exists a nonempty open set $O_Z$ of $ \bigbox _{j \in J} X_j$   such that   
$ \bigcap _{ \alpha \in Z}  C_ \alpha \supseteq O_Z$,
then  
$ \bigcap _{ \alpha \in \lambda }  C_ \alpha \not= \emptyset $.
 \end{definition}   

Notice that 
$\mathcal O ^{\Box} $-$ [ \mu, \lambda ]$-compactness
is a notion stronger than 
$\mathcal O $-$ [ \mu, \lambda ]$-compactness,
that is,
every 
$\mathcal O ^{\Box} $-$ [ \mu, \lambda ]$-compact
product $ \prod _{j \in J} X_j$ is
$\mathcal O  $-$ [ \mu, \lambda ]$-compact.
The two notions are distinct, in general, as we shall see in 
Remark \ref{civuolebox}.
Notice also that every 
$ [ \mu, \lambda ]$-compact product is
$\mathcal O^{\Box}$-$ [ \mu, \lambda ]$-compact.

\begin{remark} \label{nothom} 
Notice that $\mathcal O ^{\Box} $-$ [ \mu, \lambda ]$-compactness is not
an intrinsic  property of the topological space
$Y= \prod _{j \in J} X_j$. That is,
$\mathcal O ^{\Box} $-$ [ \mu, \lambda ]$-\brfrt compactness does not only depend
on the topology on $Y$, but depends also on the way
$Y$ is realized as a product.
There might be two homeomorphic spaces, say,  
$Y= \prod _{j \in J} X_j$ and
$Z= \prod _{h\in H} Y_h$
such that $Y$, \emph{as a product}
$ \prod _{j \in J} X_j$, is
$\mathcal O ^{\Box} $-$ [ \mu, \lambda ]$-compact,
while 
$Z $, \emph{as a product} $  \prod _{h\in H} Y_h$, is not.
Just to consider a simple case, if 
$Y=  \prod _{j \in J} X_j$, and $Z$ is a homeomorphic copy
of $Y$, and we consider $Z$ ``as itself'', that is, as the product of just a single factor, then
 $Z$ is $\mathcal O ^{\Box} $-$ [ \mu, \lambda ]$-compact
if and only if it is 
$\mathcal O  $-$ [ \mu, \lambda ]$-compact.
On the contrary, as we shall see, 
$\mathcal O ^{\Box} $-$ [ \mu, \lambda ]$-compactness
and 
$\mathcal O  $-$ [ \mu, \lambda ]$-compactness  
are distinct notions, in general.

The above remark will cause no problem here, since we will
always be dealing with a space $Y=\prod _{j \in J} X_j$ together with
just one single realization of $Y $ as $  \prod _{j \in J} X_j$.
In other words, we shall never deal with the homeomorphism
equivalence class of
$Y $,
but we shall always deal with $Y= \prod _{j \in J} X_j$ 
just in its concrete realization.
\end{remark}   

Of course, $\mathcal O ^{\Box} $-$ [ \mu, \lambda ]$-compactness
can be characterized in a way similar to the characterizations
of  
$\mathcal O  $-$ [ \mu, \lambda ]$-compactness 
given in Proposition \ref{equivo}.
Clause (7) in the next proposition is proved as  the last
statement of Definition \ref{fcpnop}.

\begin{proposition} \label{equivobox}
For every  sequence  $ (X_j)_{j \in J}$
 of topological spaces, and $\lambda$, $\mu$  infinite cardinals, 
the following are equivalent, where, in items (2)-(5), closures are computed in
$ \prod _{j \in J} X_j$.

\begin{enumerate} 
\item
$\prod _{j \in J} X_j$ is
$\mathcal O^{\Box}$-$ [ \mu, \lambda ]$-compact.

\item
For every  sequence  $( P _ \alpha ) _{ \alpha \in \lambda } $ of 
 subsets of $ \prod _{j \in J} X_j$, if,
for every $Z \subseteq \lambda $ with $ |Z|< \mu$,
there exists a nonempty open set $O_Z$ of $ \bigbox _{j \in J} X_j$   such that   
$ \bigcap _{ \alpha \in Z}  P_ \alpha \supseteq O_Z$,
then  
$ \bigcap _{ \alpha \in \lambda }  \overline{P}_ \alpha \not= \emptyset $.

\item
For every  sequence  $( Q _ \alpha ) _{ \alpha \in \lambda } $
 of open sets of $ \bigbox _{j \in J} X_j$, if,
for every $Z \subseteq \lambda $ with $ |Z|< \mu$,
there exists a nonempty open set $O_Z$ of $ \bigbox _{j \in J} X_j$   such that   
$ \bigcap _{ \alpha \in Z}  Q_ \alpha \supseteq O_Z$,
then  
$ \bigcap _{ \alpha \in \lambda }  \overline{Q}_ \alpha \not= \emptyset $.

\item
For every  sequence 
$\{ O_Z \mid Z \in S_ \mu ( \lambda ) \}$
of nonempty open sets of $ \bigbox _{j \in J} X_j$,
it happens that
    $ \bigcap _{ \alpha \in \lambda } 
 \overline{  \bigcup  \{ O_Z \mid  Z \in S_ \mu ( \lambda ),  \alpha \in Z \}}
  \not= \emptyset $.

\item
For every  sequence 
$\{ O_Z \mid Z \in S_ \mu ( \lambda ) \}$
of nonempty open sets of $ \bigbox _{j \in J} X_j$,
the following holds.
If, for every finite subset $W$  of $\lambda$,
we put  
$Q_W= 
  \bigcup \{ O_Z \mid Z \in S_ \mu ( \lambda ) \text{ and } Z \supseteq
W\}$,
then 
$\bigcap \{ \overline{Q_W} \mid W \text{ is a finite subset of } \lambda  \}\not= \emptyset $.

\item
For every  sequence 
$\{ C_Z \mid Z \in S_ \mu ( \lambda ) \}$
of closed sets of $ \bigbox _{j \in J} X_j$, such that each
$C_Z$ is properly contained in $X$,
if we let, for $\alpha \in \lambda $,
$P_ \alpha $ be the interior
(computed in
$ \prod _{j \in J} X_j$)
of 
$ \bigcap  \{ C_Z \mid  Z \in S_ \mu ( \lambda ),  \alpha \in Z \}$, then
we have that 
$(P_ \alpha ) _{ \alpha \in \lambda } $
is not a cover of $X$.  

\item
For every $\lambda$-indexed open cover
 $( Q _ \alpha ) _{ \alpha \in \lambda } $
 of $ \prod _{j \in J} X_j$, there exists $Z \subseteq \lambda $, with $ |Z|< \mu $,
such that
$ \bigcup _{ \alpha \in Z}  Q_ \alpha$
is a dense subset in $ \bigbox _{j \in J} X_j$.
\end{enumerate}
 \end{proposition}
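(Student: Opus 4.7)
The plan is to mirror the proof of Proposition \ref{equivo} essentially line by line, being careful about the distinction between open sets of $\bigbox_{j \in J} X_j$ (used as the witnesses $O_Z$, $Q_\alpha$) and closures/interiors, which are always computed in $\prod_{j \in J} X_j$. The key point is that closed sets in the Tychonoff product are automatically closed in the box product (since the box topology is finer), so the interplay between the two topologies never actually causes trouble: having ``more'' open sets available for the witnesses only makes the hypotheses of the definition easier to satisfy, and no calculation in the equivalence proof ever requires that a witness be Tychonoff-open.

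First I would show (1) $\Rightarrow$ (2) by setting $C_\alpha = \overline{P_\alpha}$ (closure in $\prod_{j \in J} X_j$); these are Tychonoff-closed, the hypothesis on $O_Z$ transfers verbatim, and $\bigcap_\alpha C_\alpha = \bigcap_\alpha \overline{P_\alpha}$. Then (2) $\Rightarrow$ (3) is trivial since box-open sets are subsets of themselves. For (3) $\Rightarrow$ (5), I would follow the argument in Proposition \ref{equivo}: for finite $W \subseteq \lambda$ let $Q_W$ be as stated, observe that these form a $\lambda$-indexed family of box-open sets, and that for any $\nu < \mu$ and finite subsets $(W_\beta)_{\beta \in \nu}$, the set $Z = \bigcup_\beta W_\beta$ lies in $S_\mu(\lambda)$ and satisfies $\bigcap_\beta Q_{W_\beta} \supseteq O_Z$. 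Applying (3) yields (5).

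For (5) $\Rightarrow$ (4) I would note that $\bigcup\{O_Z : \alpha \in Z\} = Q_{\{\alpha\}}$, and that for any finite $W$ and any $\alpha \in W$, $Q_W \subseteq Q_{\{\alpha\}}$, so $\bigcap_{W \text{ finite}} \overline{Q_W} \subseteq \bigcap_{\alpha \in \lambda} \overline{Q_{\{\alpha\}}}$, giving (4). Alternatively, I would give a direct (3) $\Rightarrow$ (4) as in Proposition \ref{equivo}: set $Q_\alpha = \bigcup\{O_Z : \alpha \in Z\}$; then for $Z \in S_\mu(\lambda)$ and $\alpha \in Z$, $Q_\alpha \supseteq O_Z$, so $\bigcap_{\alpha \in Z} Q_\alpha \supseteq O_Z$, and (3) applies. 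The implication (4) $\Rightarrow$ (1) is also identical to the corresponding step in Proposition \ref{equivo}: given $(C_\alpha)$ and $(O_Z)$ from the definition, set $C'_\alpha = \overline{\bigcup\{O_Z : \alpha \in Z\}}$; the hypothesis gives $C_\alpha \supseteq C'_\alpha$, and (4) supplies a point in $\bigcap_\alpha C'_\alpha \subseteq \bigcap_\alpha C_\alpha$.

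Finally, (4) $\Leftrightarrow$ (6) is obtained by taking complements (the interior in $\prod_{j \in J} X_j$ of $\bigcap\{C_Z : \alpha \in Z\}$ is the complement of $\overline{\bigcup\{X \setminus C_Z : \alpha \in Z\}}$), and (1) $\Leftrightarrow$ (7) is the box-topology analogue of the dense-cover reformulation given at the end of Definition \ref{fcpnop}, with the same complementation argument. The only ``obstacle'' worth flagging is a purely bookkeeping one: in several steps one must verify that the $\lambda$-indexed family obtained by reindexing over finite subsets of $\lambda$ (which has cardinality $\lambda$ since $\lambda$ is infinite) is genuinely a family of box-open sets and that the intersections of fewer than $\mu$ of them still contain a box-open witness — but these are exactly the verifications carried out in Proposition \ref{equivo}, and nothing in them is sensitive to the choice of topology for the witnesses.
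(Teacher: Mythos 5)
Your proposal is correct and is exactly the argument the paper intends: the paper omits an explicit proof, stating only that the characterizations of Proposition \ref{equivo} carry over (with clause (7) handled by complementation as in Definition \ref{fcpnop}), and your line-by-line transcription — with the correct observation that the witnesses may be box-open while all closures and interiors stay Tychonoff — is precisely that carry-over. The paper also notes this is the special case of Theorem \ref{equivfdcpn} with $\mathcal F$ the family of nonempty box-open sets, which is the same point you make about the witnesses' topology never entering the calculations.
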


The proof of Theorem \ref{ufo}
carries over essentially unchanged in order to get the following
useful theorem.

\begin{theorem} \label{ufobox}
For every  sequence  $ (X_j)_{j \in J}$
 of topological spaces, and $\lambda$, $\mu$  infinite cardinals, 
the following are equivalent.

\begin{enumerate} 
\item
$\prod _{j \in J} X_j$ is
$\mathcal O^{\Box}$-$ [ \mu, \lambda ]$-compact.

\item
For every  sequence 
$\{ O_Z \mid Z \in S_ \mu ( \lambda ) \}$
of nonempty open sets of $ \bigbox _{j \in J} X_j$,
there exists an ultrafilter $D$ over 
$S _\mu ( \lambda )$ which covers $\lambda$ and 
such that 
 $\{ O_Z \mid Z \in S_ \mu ( \lambda ) \}$
has a $D$-limit point in $\prod _{j \in J} X_j$.
\end{enumerate}
 \end{theorem}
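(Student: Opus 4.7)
The plan is to imitate the proof of Theorem \ref{ufo}, substituting the box topology for the domain of the $O_Z$ while keeping the Tychonoff topology for the ambient space in which the $D$-limit is located. The two key characterizations from Proposition \ref{equivobox} that I will invoke are item (4), whose closures are Tychonoff closures, and item (5), which rephrases item (4) in terms of the ``dual-finite'' unions $Q_W = \bigcup\{O_Z \mid Z \in S_\mu(\lambda),\ Z \supseteq W\}$. All that matters for the argument is that Tychonoff-open neighborhoods of a point interact with box-open sets $O_Z$ in the usual way (an intersection is either empty or not), so the proof structure transfers without genuine obstacles.

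For (1) $\Rightarrow$ (2), I start from a sequence $\{O_Z \mid Z \in S_\mu(\lambda)\}$ of nonempty box-open sets. By $\mathcal O^{\Box}$-$[\mu,\lambda]$-compactness together with Proposition \ref{equivobox}(5), there exists $x \in \prod_{j \in J} X_j$ lying in every Tychonoff closure $\overline{Q_W}$, as $W$ ranges over finite subsets of $\lambda$. For each Tychonoff-neighborhood $U$ of $x$ set $A_U = \{Z \in S_\mu(\lambda) \mid U \cap O_Z \neq \emptyset\}$, and for each $\alpha \in \lambda$ set $[\alpha) = \{Z \in S_\mu(\lambda) \mid \alpha \in Z\}$. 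The finite intersection property of the family $\{[\alpha) \mid \alpha \in \lambda\} \cup \{A_U \mid U \text{ nbhd of } x\}$ is checked exactly as in the proof of Theorem \ref{ufo}: given $U_1, \dots, U_n$ and $\alpha_1, \dots, \alpha_m$, let $U = U_1 \cap \dots \cap U_n$ and $W = \{\alpha_1, \dots, \alpha_m\}$; since $x \in \overline{Q_W}$ the Tychonoff-open set $U$ meets $Q_W$, which furnishes some $Z \supseteq W$ with $U \cap O_Z \neq \emptyset$, and this $Z$ lies in the intersection. Extending this family to an ultrafilter $D$ over $S_\mu(\lambda)$ gives an ultrafilter that covers $\lambda$ and has $x$ as a $D$-limit point of $\{O_Z\}$ in $\prod_{j \in J} X_j$.

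For (2) $\Rightarrow$ (1), I will verify Proposition \ref{equivobox}(4). Given $\{O_Z \mid Z \in S_\mu(\lambda)\}$ of nonempty box-open sets, let $D$ and $x$ be as supplied by (2), so that $[\alpha) \in D$ for every $\alpha$ and $\{Z \mid U \cap O_Z \neq \emptyset\} \in D$ for every Tychonoff-neighborhood $U$ of $x$. If $x$ fails to lie in some $C_\alpha := \overline{\bigcup\{O_Z \mid \alpha \in Z\}}$, pick a Tychonoff-neighborhood $U$ of $x$ disjoint from $C_\alpha$; then for every $Z \in [\alpha)$ we have $O_Z \subseteq C_\alpha$ and therefore $U \cap O_Z = \emptyset$. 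But this makes $\{Z \mid U \cap O_Z \neq \emptyset\}$ and $[\alpha)$ disjoint members of $D$, a contradiction. Hence $x \in \bigcap_{\alpha \in \lambda} C_\alpha$, giving (4), and therefore (1).

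I do not anticipate a real difficulty: the only point deserving attention is the mixing of the two topologies, namely that the $O_Z$ are box-open while the closures, intersections with neighborhoods, and the $D$-limit point itself are all interpreted in the Tychonoff product. Once one keeps this bookkeeping straight, every step of the proof of Theorem \ref{ufo} transcribes verbatim. For brevity I would simply point out the (few) replacements required and refer to the proof of Theorem \ref{ufo} for the details.
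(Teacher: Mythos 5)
Your proposal is correct and is exactly the paper's approach: the paper proves Theorem \ref{ufobox} by stating that the proof of Theorem \ref{ufo} carries over essentially unchanged, and your writeup is precisely that transcription, correctly keeping the $O_Z$ box-open while computing closures, neighborhoods, and the $D$-limit point in the Tychonoff product, via the appropriate clauses (4) and (5) of Proposition \ref{equivobox}.
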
 

Theorem \ref{ufobox}
can be used to improve
the last statement in Corollary \ref{regimpcpn}.

\begin{corollary} \label{regimpcpnbox}
Suppose that  $D$ is a $( \mu, \lambda  )$-regular ultrafilter. 

If $(X_j) _{j \in J} $
is a  sequence  of  
$D$-pseudocompact topological spaces,
then  
$ \prod _{j \in J} X_j $  is
$\mathcal O^{\Box}$-$ [ \mu, \lambda ]$-compact.
 \end{corollary}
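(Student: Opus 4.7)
The plan is to mimic the proof of Corollary \ref{regimpcpn}, replacing the use of Theorem \ref{ufo} by Theorem \ref{ufobox} and invoking $D$-pseudocompactness on each factor separately to construct a Tychonoff limit point coordinatewise. By $(\mu, \lambda)$-regularity, fix $f \colon I \to S_\mu(\lambda)$ with $f(D)$ covering $\lambda$. Given an arbitrary sequence $\{ O_Z \mid Z \in S_\mu(\lambda) \}$ of nonempty open sets of $\bigbox_{j \in J} X_j$, the goal is to exhibit a point of $\prod_{j \in J} X_j$ that is an $f(D)$-limit of this sequence; Theorem \ref{ufobox} will then deliver $\mathcal O^{\Box}$-$[\mu, \lambda]$-compactness.

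The first step is to refine: for each $Z \in S_\mu(\lambda)$, pick a basic box-open set $\prod_{j \in J} U_Z^j \subseteq O_Z$, where every $U_Z^j$ is a nonempty open subset of $X_j$. For each fixed coordinate $j \in J$, the family $(U_{f(i)}^j)_{i \in I}$ is an $I$-indexed sequence of nonempty open sets of $X_j$, so by $D$-pseudocompactness of $X_j$ it admits some $D$-limit point $x_j \in X_j$. Let $x = (x_j)_{j \in J} \in \prod_{j \in J} X_j$.

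Next, one verifies that $x$ is an $f(D)$-limit point of $(O_Z)_{Z \in S_\mu(\lambda)}$ in the Tychonoff product. A basic Tychonoff neighborhood of $x$ has the form $V = \prod_{j \in J} V_j$, where $F \subseteq J$ is finite, each $V_j$ for $j \in F$ is a neighborhood of $x_j$, and $V_j = X_j$ for $j \notin F$. For each $j \in F$, the set $A_j = \{ i \in I \mid V_j \cap U_{f(i)}^j \neq \emptyset \}$ lies in $D$ by choice of $x_j$. Since $F$ is finite, $\bigcap_{j \in F} A_j \in D$; and for $i$ in this intersection the product $V \cap \prod_{j \in J} U_{f(i)}^j = \prod_{j \in J} (V_j \cap U_{f(i)}^j)$ is nonempty, because on coordinates outside $F$ the factor $V_j = X_j$ automatically meets the nonempty $U_{f(i)}^j$. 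Therefore $\{ i \in I \mid V \cap O_{f(i)} \neq \emptyset \} \in D$, i.e.\ $\{ Z \in S_\mu(\lambda) \mid V \cap O_Z \neq \emptyset \} \in f(D)$, as required.

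The only real subtlety, and the source of the strengthening over Corollary \ref{regimpcpn}, is the asymmetry between the box topology, which governs the given $O_Z$ and may pin down infinitely many coordinates, and the Tychonoff topology, in which the limit point must be realized. Refining each $O_Z$ to a basic box-open set with a \emph{nonempty} factor in every coordinate is precisely what makes the finite-support structure of Tychonoff neighborhoods benign and allows the coordinatewise choice of $D$-limits to assemble into a genuine $f(D)$-limit of the original sequence.
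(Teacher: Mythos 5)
Your argument is correct and is essentially the proof the paper intends (the paper leaves it implicit as ``Theorem \ref{ufobox} can be used to improve Corollary \ref{regimpcpn}''): push $D$ forward to $f(D)$ over $S_\mu(\lambda)$ covering $\lambda$, check that coordinatewise $D$-limit points of the factors assemble into a $D$-limit point, in the Tychonoff product, of a sequence of nonempty boxes refining the given box-open sets, and conclude by Theorem \ref{ufobox}. The only difference is that you verify the coordinatewise assembly by hand, whereas the paper delegates it to the productivity of $D$-limits (\cite[Theorem 4.3]{GS}, generalized as $\mathcal F$-$D$-compactness of products in Section \ref{abstrfr}); your explicit refinement of each $O_Z$ to a basic box with nonempty factors in all coordinates is exactly the point that makes this work.
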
 
 
We are now going to show that a topological space $X$ 
is $D$-\brfrt pseudocompact for some $( \mu, \lambda  )$-regular ultrafilter $D$ 
if and only if all (Tychonoff) powers of $X$ are $\mathcal O^{\Box}$-$ [ \mu, \lambda ]$-compact. We shall denote by $X ^ \delta $
the Tychonoff product of $\delta$-many copies of $X$.

\begin{theorem} \label{fprodo} 
For every 
 topological space $X$, and $\lambda$, $\mu$  infinite cardinals, 
the following are equivalent. 
\begin{enumerate} 
\item
There exists some ultrafilter 
 $D$ over  
$ S_ \mu ( \lambda )$ which covers $\lambda$, 
and such that $X$ is $D$-pseudocompact.
\item 
There exists some $( \mu, \lambda  )$-regular ultrafilter $D$
(over any set)
such that $X$ is $D$-pseudocompact.
\item 
There exists some $( \mu, \lambda  )$-regular ultrafilter $D$
such that, for every cardinal $\delta$,
the space $X^ \delta $  is $D$-pseudocompact.
\item
The power $X^ \delta $ is 
$\mathcal O^{\Box}$-$ [ \mu, \lambda ]$-compact,
for every cardinal $\delta$.
\item
The power $X^ \delta $ is 
$\mathcal O^{\Box}$-$ [ \mu, \lambda ]$-compact,
for  $\delta= \min \{ 2 ^{2^ \kappa  }, (w(X))^ \kappa  \}$,
where $\kappa=\lambda ^{<\mu}$.  
\end{enumerate}   
\end{theorem}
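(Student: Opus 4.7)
The plan is to close the loop of implications (1) $\Rightarrow$ (2) $\Rightarrow$ (3) $\Rightarrow$ (4) $\Rightarrow$ (5) $\Rightarrow$ (1). Four of these are essentially direct. For (1) $\Rightarrow$ (2), the identity map $S_\mu(\lambda) \to S_\mu(\lambda)$ witnesses that any ultrafilter over $S_\mu(\lambda)$ covering $\lambda$ is $(\mu,\lambda)$-regular. For (2) $\Rightarrow$ (3) I would appeal to the productivity of $D$-pseudocompactness (\cite[Theorem~4.3]{GS}), so that $X^\delta$ inherits $D$-pseudocompactness from $X$ for every $\delta$. For (3) $\Rightarrow$ (4), write $X^\delta$ as the Tychonoff product of $\delta$-many copies of $X$, each $D$-pseudocompact, and invoke Corollary~\ref{regimpcpnbox} to conclude that $X^\delta$ is $\mathcal{O}^\Box$-$[\mu,\lambda]$-compact. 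Finally (4) $\Rightarrow$ (5) is just a specialization of $\delta$.

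The substantive content is (5) $\Rightarrow$ (1), for which Theorem~\ref{ufobox} is the key tool. The idea is to encode a sufficiently rich family of sequences of nonempty open sets of $X$ into a single box-open sequence in $X^\delta$, and then apply $\mathcal{O}^\Box$-$[\mu,\lambda]$-compactness to extract, in one stroke, a single ultrafilter $D^*$ over $S_\mu(\lambda)$ covering $\lambda$ together with $D^*$-limit witnesses for all the encoded sequences. Two encodings are available, matching the two terms in $\min\{2^{2^\kappa},(w(X))^\kappa\}$. Approach~A (when the minimum is $(w(X))^\kappa$): fix a base $\mathcal{B}$ of nonempty open sets of $X$ with $|\mathcal{B}|\leq w(X)$, and enumerate all $\leq (w(X))^\kappa$ functions $\sigma : S_\mu(\lambda) \to \mathcal{B}$ as $(\sigma_\gamma)_{\gamma<\delta}$. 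Approach~B (when the minimum is $2^{2^\kappa}$): argue by contradiction, assuming $X$ is not $D$-pseudocompact for any ultrafilter $D$ over $S_\mu(\lambda)$ covering $\lambda$; enumerate all such $D$ as $(D_\gamma)_{\gamma<\delta}$ (there are at most $2^{2^\kappa}$), and for each $\gamma$ pick a witness sequence $\sigma_\gamma : S_\mu(\lambda) \to \mathcal{O}$ of nonempty open sets having no $D_\gamma$-limit point.

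In either encoding, I set $O^*_Z = \prod_{\gamma<\delta} \sigma_\gamma(Z)$ for $Z \in S_\mu(\lambda)$; each $O^*_Z$ is nonempty and box-open in $X^\delta$. By hypothesis~(5) and Theorem~\ref{ufobox} applied to $X^\delta$, there exist an ultrafilter $D^*$ over $S_\mu(\lambda)$ covering $\lambda$ and a point $x^* \in X^\delta$ which is a $D^*$-limit of $(O^*_Z)_Z$. A routine computation with basic Tychonoff cylinder neighborhoods of $x^*$, using nonemptiness of the remaining factors $\sigma_{\gamma'}(Z)$, shows that each coordinate $x^*_\gamma$ is a $D^*$-limit point of $\sigma_\gamma$ in $X$. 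Under Approach~A this yields $D^*$-limit points for every sequence of basic nonempty open sets of $X$, and hence for every sequence of nonempty open sets by containment, so $X$ is $D^*$-pseudocompact, giving~(1). Under Approach~B, choosing $\gamma$ with $D_\gamma=D^*$ contradicts the defining property of $\sigma_\gamma$. The principal obstacle, beyond the encoding idea itself, is verifying that one of the two approaches is always available for $\delta=\min\{2^{2^\kappa},(w(X))^\kappa\}$; once the appropriate encoding is in hand, the rest is a careful but routine application of Theorem~\ref{ufobox}.
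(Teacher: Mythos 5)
Your proposal is correct and follows essentially the same route as the paper: the chain (1)$\Rightarrow$(2)$\Rightarrow$(3)$\Rightarrow$(4)$\Rightarrow$(5) is handled identically, and for (5)$\Rightarrow$(1) your Approaches A and B are exactly the paper's two cases $\delta=(w(X))^\kappa$ (direct encoding of all base-valued sequences) and $\delta=2^{2^\kappa}$ (the paper phrases it as the contrapositive, you as a contradiction, but the argument is the same), both resting on Theorem~\ref{ufobox} and the coordinatewise projection of $D$-limits. The "obstacle" you flag at the end is not a real one: whichever term realizes the minimum, the corresponding enumeration has at most $\delta$ entries, so that encoding is available.
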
  

\begin{proof}
(1) $\Rightarrow $  (2) is trivial, since if 
 $D$ is over  
$ S_ \mu ( \lambda )$ and covers $\lambda$, 
then $D$ is  $( \mu, \lambda  )$-regular.

(2) $\Rightarrow $  (3) follows from the mentioned result from \cite[Theorem 4.3]{GS},
asserting that a product of $D$-pseudocompact spaces is still 
$D$-pseudocompact.

(3) $\Rightarrow $  (4) follows from Corollary \ref{regimpcpnbox}.

(4) $\Rightarrow $  (5) is trivial.

(5) $\Rightarrow $  (1) 
We first 
consider the case $\delta =(w(X))^ \kappa $.

Let $\mathcal B$ be a base of $X$ of cardinality $w(X)$.
 Thus, there are $ \delta $-many
 $S _ \mu (\lambda)$-indexed
sequences of elements of $\mathcal B$,
since $|S _ \mu (\lambda)|= \kappa $.
Let us enumerate
them as 
 $\{  Q _{\beta , Z} \mid Z \in S_ \mu ( \lambda ) \}$,
$ \beta $ varying in  $ \delta $.
In $X^ \delta $
consider the sequence 
 $\{ \prod _{ \beta \in \delta } Q _{\beta , Z} \mid Z \in S_ \mu ( \lambda ) \}$.
For every $Z \in S _ \mu (\lambda)$,  
the set $\prod _{ \beta \in \delta } Q _{\beta , Z} $
is open in the box topology on $X^ \delta $.
By the $\mathcal O^{\Box}$-$ [ \mu, \lambda ]$-compactness of
$X^ \delta $, and by Theorem  \ref{ufobox}(1) $\Rightarrow $  (2), 
there exists an ultrafilter $D$ over 
$S _\mu ( \lambda )$ which covers $\lambda$ and 
such that 
 $\{ \prod _{ \beta \in \delta } Q _{\beta , Z} \mid Z \in S_ \mu ( \lambda ) \}$
has some $D$-limit point $x$ in $X ^ \delta $.

We are going to show that $X$ is $D$-pseudocompact.  
So, let 
 $\{ O_Z \mid Z \in S_ \mu ( \lambda ) \}$
be a sequence of nonempty open sets of $X$.
Since $\mathcal B$ is a base for $X$, then,
for every $Z \in S_ \mu ( \lambda )$,
there is a nonempty $B_Z$ in $\mathcal B$
such that $O_Z \supseteq B_Z$.
Choose one such $B_Z$ for each  $Z \in S_ \mu ( \lambda )$.     
The sequence 
 $\{ B_Z \mid Z \in S_ \mu ( \lambda ) \}$
 is an
 $S _ \mu (\lambda)$-indexed
sequences of elements of $\mathcal B$.
Since, by construction, all such sequences are enumerated by
$\{  Q _{\beta , Z} \mid Z \in S_ \mu ( \lambda ) \}$,
there is some
$\beta _0 \in \delta $ such that 
$B_Z=Q _{\beta _0 , Z} $, for every 
$Z \in S _ \mu (\lambda) $.  

By what we have proved before, the sequence
 $\{ \prod _{ \beta \in \delta } Q _{\beta , Z} \mid Z \in S_ \mu ( \lambda ) \}$
has some $D$-limit point $x$ in $X ^ \delta $, say 
$x=(x_ \beta ) _{ \beta \in \delta } $.
A trivial property of $D$-limits
implies that, for every $\beta \in \delta $,
we have that $x_\beta$ is a $D$-limit of   
$ \{ Q _{\beta , Z} \mid Z \in S_ \mu ( \lambda ) \}$.
In particular, by taking $\beta= \beta _0$, we
get that $x _{\beta_0}$ is a $D$-limit point of  
$\{ B_Z \mid Z \in S_ \mu ( \lambda ) \}$.

Since $O_Z \supseteq B_Z$, for every 
$ Z \in S _ \mu (\lambda)$,
we get that  
$x _{\beta_0}$ is also a $D$-limit point of  
 $\{ O_Z \mid Z \in S_ \mu ( \lambda ) \}$.
We have proved that every 
sequence
 $\{ O_Z \mid Z \in S_ \mu ( \lambda ) \}$
of nonempty open sets of $X$
has some $D$-limit point in $X$, that is, 
$X$ is $D$-pseudocompact.

Now we consider the case 
$\delta= 2 ^{2^ \kappa  } $.
 We shall prove that
if   $\delta= 2 ^{2^ \kappa  } $ and
(1) fails, then  (5) fails.
If (1) fails, then, for every ultrafilter $D$ over
$S_ \mu ( \lambda )$ which covers $\lambda$,  
there is a sequence 
 $\{ O_Z \mid Z \in S_ \mu ( \lambda ) \}$
of nonempty open sets of $X$
 which has no $D$-limit point.
Since there are $\delta $-many ultrafilters over $S_ \mu ( \lambda )$,
we can enumerate the above sequences as
 $\{ O _{\beta , Z} \mid Z \in S_ \mu ( \lambda ) \}$,
$ \beta $ varying in  $ \delta $.
 Now, given any ultrafilter 
$D$ over
$S_ \mu ( \lambda )$ and covering $\lambda$,
it is not the case that 
 the sequence 
 $\{ \prod _{ \beta \in \delta } O _{\beta , Z} \mid Z \in S_ \mu ( \lambda ) \}$
has some $D$-limit point.
Indeed, were $x=(x_ \beta ) _{ \beta \in \delta } $
a $D$-limit point of 
 $\{ \prod _{ \beta \in \delta } O _{\beta , Z} \mid Z \in S_ \mu ( \lambda ) \}$,
then, by a trivial property of $D$-limits,
for every $\beta \in \delta $,
$x_ \beta $ would be a $D$-limit point of
  $\{  O _{\beta , Z} \mid Z \in S_ \mu ( \lambda ) \}$.
This is a contradiction since, by construction, 
for every ultrafilter $D$ over $S_ \mu ( \lambda )$  covering $\lambda$,
there exists some 
$\beta \in \delta $ such that 
  $\{  O _{\beta , Z} \mid Z \in S_ \mu ( \lambda ) \}$
has no $D$-limit point.

We have showed that
for no ultrafilter 
$D$ over
$S_ \mu ( \lambda )$ and covering $\lambda$ 
 the sequence 
 $\{ \prod _{ \beta \in \delta } O _{\beta , Z} \mid Z \in S_ \mu ( \lambda ) \}$
has some $D$-limit point. 
Since, for every $Z \in S_ \mu ( \lambda )$,  $\prod _{ \beta \in \delta } O _{\beta , Z}$ is an 
open set of the box topology on $X^ \delta $, we get that,  
by Theorem  \ref{ufobox}
(1) $\Rightarrow $  (2),
$X^ \delta $ is not
$\mathcal O^{\Box}$-$ [ \mu, \lambda ]$-compact, that is, 
(5) fails.
\end{proof}

\begin{remark} \label{cofslm} 
Condition (5) in Theorem \ref{fprodo} can be improved
to the effect that we can take 
$\kappa$ there to be equal to the cofinality 
of the partial order 
$S_ \mu ( \lambda )$.
A subset $H$ of $S_ \mu ( \lambda )$
is said to be \emph{cofinal} in $S_ \mu ( \lambda )$ 
if and only if, for every 
$Z \in S_ \mu ( \lambda )$,
 there is $Z' \in H$ such that 
$Z \subseteq Z'$.
The \emph{cofinality} $\cf S_ \mu ( \lambda )$ of $S_ \mu ( \lambda )$
is the minimal cardinality of some subset    
$H$ cofinal in $S_ \mu ( \lambda )$.
Notice that if $\lambda$ is regular, then 
$ \cf S_ \lambda  ( \lambda )= \lambda $ and, more generally,
$ \cf S_ \lambda  ( \lambda^+ )= \lambda^+ $.
Highly non trivial results about 
$\cf S_ \mu ( \lambda )$ are consequences of Shelah's pcf-theory
\cite{Sh}.

For the rest of this remark, let us fix some 
subset    
$H$ cofinal in $S_ \mu ( \lambda )$.

All the definitions and results involving  
$S_ \mu ( \lambda )$ can be modified in order to apply
to $H$, too. 
In particular, in the definitions of
$\mathcal O$-$ [ \mu, \lambda ]$-compactness and of 
$\mathcal O^{\Box}$-$ [ \mu, \lambda ]$-compactness,
we get an equivalent notion if we consider only
those $Z \in H$.  
Similarly, in Propositions \ref{equivo} and \ref{equivobox}  
we can equivalently consider $H$-indexed sequences, rather than
$S_ \mu ( \lambda )$-indexed sequences, that is,
we can replace everywhere $Z \in S_ \mu ( \lambda )$
by $Z \in H$, still obtaining the results.

Moreover, we can say that an ultrafilter $D$ over  $H$
covers $\lambda$ if and only if,
for every $\alpha \in \lambda $,
it happens that
$[ \alpha )_H =\{ Z \in H \mid \alpha \in Z \} \in D$.
With this definition,
we have that Theorems \ref{ufo} and \ref{ufobox},  
too, hold, if $Z \in S_ \mu ( \lambda )$
is everywhere replaced by $Z \in H$.

Moreover, let $f:S_ \mu ( \lambda ) \to H$
be defined in such a way that 
$Z \subseteq f(Z) $. 
If $D$ is over $ S_ \mu ( \lambda )$
and covers $\lambda$, then $f(D)$ is over
(a subset of) $H$, and   $f(D)$, too, covers $\lambda$.
The above observations give us the possibility of proving
Theorem \ref{fprodo} with the improved value
$\kappa= \cf S_ \mu ( \lambda )$
in Condition (5).
 \end{remark}

\begin{remark} \label{civuolebox}  
In order to get results like Theorem \ref{fprodo},
it is actually necessary to deal with 
$\mathcal O^{\Box}$-$ [ \mu, \lambda ]$-compactness,
rather than with $\mathcal O$-$ [ \mu, \lambda ]$-compactness.
Indeed, \cite[Example 4.4]{GS} 
constructed
 a Tychonoff space $X$ such that  
all  powers of $X$ are pseudocompact
but for no ultrafilter $D$ uniform over $ \omega$,
$X$ is $D$-pseudocompact. 
By Remark \ref{precedenti}, all  powers of $X$ are $\mathcal O$-$ [ \omega, \omega ]$-compact.
 The condition that, for no ultrafilter $D$ uniform over $ \omega$,
$X$ is $D$-pseudocompact is easily seen to be equivalent to the property that 
for no ultrafilter $D$  over $ S_ \omega ( \omega )$ and covering $ \omega$,
$X$ is $D$-pseudocompact.
The equivalence can be proved directly; otherwise, notice that, for 
$\mu=\lambda$ a regular cardinal,
 Condition (4) in Theorem \ref{fprodo} coincides with 
Condition (5) in \cite[Corollary 5.5]{LiF}, hence 
the respective Conditions (1) are equivalent. 

Since, for no ultrafilter $D$  over $ S_ \omega ( \omega )$ and covering $ \omega$,
$X$ is $D$-pseudocompact, we get, by Theorem \ref{fprodo},
that not every power of $X$ is 
$\mathcal O^{\Box}$-$ [ \omega, \omega ]$-compact,
but, as we remarked, every power of $X$ is
 $\mathcal O$-$ [ \omega, \omega ]$-compact, thus the two notions are distinct, in general.
Indeed, by 
 Remark 
\ref{cofslm},
we have that
$X^ \delta  $ is not $\mathcal O^{\Box}$-$ [ \omega, \omega ]$-compact
for $ \delta = 2 ^{2^ \omega } $. 

  In particular, Conditions (4) and (5) 
in Theorem \ref{fprodo}
are in general not equivalent to the other conditions, if we replace
  $\mathcal O^{\Box}$-$ [ \mu, \lambda  ]$-compactness with $\mathcal O$-$ [  \mu, \lambda   ]$-compactness.
 \end{remark} 

Indeed, as is the case for pseudocompactness, we can show that the $\mathcal O$-$ [ \mu, \lambda ]$-compactness
of a product depends only on 
the $\mathcal O$-$ [ \mu, \lambda ]$-compactness of all subproducts of some small number of factors. Thus,
we have an analogue for $\mathcal O$-$ [ \mu, \lambda ]$-compactness
of the equivalence (4) $\Leftrightarrow $  (5) in Theorem \ref{fprodo}.

\begin{lemma} \label{lm} 
If $X$ and $Y$ are topological spaces,
 $f:X \to Y$ is a continuous and surjective function, and $X$ is   
$\mathcal O$-$ [ \mu, \lambda ]$-compact then also
$Y$ is $\mathcal O$-$ [ \mu, \lambda ]$-compact. 
\end{lemma}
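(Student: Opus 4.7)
The plan is to pull back the situation from $Y$ to $X$ using continuity and surjectivity. Assume we are given a $\lambda$-indexed sequence $(C_\alpha)_{\alpha \in \lambda}$ of closed subsets of $Y$ together with, for each $Z \subseteq \lambda$ with $|Z| < \mu$, a nonempty open $O_Z \subseteq Y$ satisfying $O_Z \subseteq \bigcap_{\alpha \in Z} C_\alpha$. The goal is to produce a point in $\bigcap_{\alpha \in \lambda} C_\alpha$.

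First I would set $C'_\alpha = f^{-1}(C_\alpha)$ for every $\alpha \in \lambda$. Since $f$ is continuous, each $C'_\alpha$ is closed in $X$. Next, for each $Z \in S_\mu(\lambda)$, put $O'_Z = f^{-1}(O_Z)$. Continuity again gives $O'_Z$ open in $X$, surjectivity of $f$ together with $O_Z \neq \emptyset$ gives $O'_Z \neq \emptyset$, and the elementary set-theoretic inclusion $f^{-1}(\bigcap_{\alpha \in Z} C_\alpha) = \bigcap_{\alpha \in Z} f^{-1}(C_\alpha)$ yields $O'_Z \subseteq \bigcap_{\alpha \in Z} C'_\alpha$.

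Thus the family $(C'_\alpha)_{\alpha \in \lambda}$ in $X$, witnessed by the nonempty open sets $(O'_Z)_{Z \in S_\mu(\lambda)}$, satisfies the hypothesis of Definition \ref{fcpnop}. Applying $\mathcal O$-$[\mu,\lambda]$-compactness of $X$ delivers some $x \in \bigcap_{\alpha \in \lambda} C'_\alpha$. Finally, $x \in f^{-1}(C_\alpha)$ for every $\alpha$ means $f(x) \in C_\alpha$ for every $\alpha$, so $f(x) \in \bigcap_{\alpha \in \lambda} C_\alpha$ and this intersection is nonempty, which proves that $Y$ is $\mathcal O$-$[\mu,\lambda]$-compact.

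There is essentially no real obstacle: the argument is the standard preimage argument. The only point worth flagging is the use of surjectivity, which is needed solely to ensure that the preimages $O'_Z$ of the witnessing open sets remain \emph{nonempty}; without surjectivity one could only conclude $\mathcal O$-$[\mu,\lambda]$-compactness of the image $f(X)$, not of $Y$.
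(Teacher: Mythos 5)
Your argument is correct: the preimage construction $C'_\alpha = f^{-1}(C_\alpha)$, $O'_Z = f^{-1}(O_Z)$ verifies the hypothesis of Definition \ref{fcpnop} in $X$ exactly as you say, with surjectivity used precisely to keep the $O'_Z$ nonempty. The paper states this lemma without proof (treating it as routine), and your proof is the standard argument that is clearly intended.
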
 

\begin{theorem} \label{prodnorm}   
Suppose that $(X_j) _{j \in J} $
is a  sequence  of  topological spaces.
Then the product 
$ \prod _{j \in J} X_j $  is
$\mathcal O$-$ [ \mu, \lambda ]$-compact
if and only if any subproduct of $ \leq \kappa $
factors is $\mathcal O$-$ [ \mu, \lambda ]$-compact,
where $\kappa=\lambda ^{<\mu}$.
Indeed, the result can be improved to 
$\kappa= \cf S_ \mu ( \lambda )$. 
\end{theorem}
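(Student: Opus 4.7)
The plan is straightforward given the characterizations already available. The forward direction is immediate from Lemma \ref{lm}: any coordinate projection $\prod_{j \in J} X_j \to \prod_{j \in J'} X_j$, where $J' \subseteq J$, is continuous and surjective (assuming the factors are nonempty; otherwise everything is empty and there is nothing to prove), so $\mathcal O$-$[\mu, \lambda]$-compactness of the full product transfers to every subproduct.

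For the converse I would use the characterization of $\mathcal O$-$[\mu, \lambda]$-compactness provided by Proposition \ref{equivo}(4), combined with the strengthening described in Remark \ref{cofslm} that allows us to fix, once and for all, a cofinal subset $H \subseteq S_\mu(\lambda)$ of cardinality $\kappa = \cf S_\mu(\lambda)$ and to index the family by $H$ rather than by all of $S_\mu(\lambda)$. So let $\{O_Z \mid Z \in H\}$ be a family of nonempty open sets of $\prod_{j \in J} X_j$. First, shrink each $O_Z$ to a nonempty basic open subset $O'_Z$: this is harmless because under shrinking the intersection of closures in (4) only becomes smaller, so a witness for $\{O'_Z\}$ also witnesses $\{O_Z\}$. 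Each $O'_Z$ has finite support in $J$, so $J^* := \bigcup_{Z \in H} \mathrm{supp}(O'_Z)$ has cardinality at most $\kappa \cdot \aleph_0 = \kappa$.

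I would then apply the hypothesis to the subproduct $Y = \prod_{j \in J^*} X_j$: since each $O'_Z$ has support in $J^*$, it corresponds to a basic open subset of $Y$, which I still denote $O'_Z$. By $\mathcal O$-$[\mu, \lambda]$-compactness of $Y$ and Proposition \ref{equivo}(4) (in the $H$-indexed form), there is some $y \in Y$ lying in $\bigcap_{\alpha \in \lambda} \overline{\bigcup\{O'_Z \mid Z \in H, \alpha \in Z\}}$, the closures being computed in $Y$. Extend $y$ to a point $x \in \prod_{j \in J} X_j$ by assigning arbitrary values on $J \setminus J^*$, using that the factors are nonempty.

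The only slightly delicate step, and the main obstacle, is to verify that $x$ lies in the corresponding intersection of closures computed in the full product. This rests on the key observation that, since each $O'_Z$ equals $X_j$ in every coordinate $j \notin J^*$, any basic open neighborhood $U = \prod_j U_j$ of $x$ in the full product meets $O'_Z$ if and only if the restriction of $U$ to coordinates in $J^*$ meets $O'_Z$ in $Y$; the coordinates outside $J^*$ are automatically accommodated because $U_j \ni x_j$ for every $j$. Since the restriction of $U$ is a basic open neighborhood of $y$ in $Y$ and $y$ belongs to the relevant closure in $Y$, we obtain what we need, completing the argument.
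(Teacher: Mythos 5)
Your proof is correct and follows essentially the same route as the paper's: the forward direction via Lemma \ref{lm}, and the converse by shrinking to basic open sets, collecting their finite supports into a set $J^*$ of size at most $\kappa$ (at most $\cf S_\mu(\lambda)$ when indexing by a cofinal $H$), applying the hypothesis to that subproduct, and pulling the witness back. Your explicit verification that the point extends to the full product (a basic neighborhood meets $O'_Z$ iff its restriction to $J^*$ does) is exactly the step the paper compresses into ``this clearly implies,'' so your write-up is, if anything, slightly more careful.
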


 \begin{proof}
The only-if part is immediate from Lemma \ref{lm}.
 
To prove the converse, given $(C_ \alpha ) _{ \alpha \in \lambda } $ as 
in the definition of $\mathcal O $-$ [ \mu, \lambda ]$-compactness,
we might assume, without loss of generality, that the $O_Z$'s are members of the canonical base of
 $ \prod _{j \in J} X_j $, that is,
each $O_Z$ has the form $ \prod _{j \in J} Q_j $,
 where each $Q_j$ is an open set of 
$X_j$, and $Q_j=X_j$,  for all $j \in J \setminus J_Z $,
for some finite $J_Z \subseteq J $. 

If $J'= \bigcup _{Z \in S_ \mu ( \lambda )} J_Z$,
and $\pi : \prod _{j \in J} X_j  \to \prod _{j \in J'} X_j $ 
is the canonical projection, then,
by assumption,
 $\prod _{j \in J'} X_j $ is
$\mathcal O $-$ [ \mu, \lambda ]$-compact,
since $|J'|\leq \kappa $, 
hence
$\bigcap_{ \alpha \in \lambda } \pi(C_ \alpha)  \not= \emptyset    $,
and this clearly implies
$\bigcap_{ \alpha \in \lambda }  C_ \alpha   \not= \emptyset    $.
 
By arguments similar to those in Remark \ref{cofslm},
we can improve the value of $\kappa$ to $\cf S_ \mu ( \lambda )$.
\end{proof}

For sake of simplicity, in the statement of Theorem \ref{fprodo} we have dealt
with a single topological space $X$. However, a version of the theorem
holds for families of topological spaces.

\begin{theorem} \label{fprodot}
For every family $T$ of
 topological spaces, and $\lambda$, $\mu$  infinite cardinals, 
the following are equivalent. 
\begin{enumerate} 
\item 
There exists some $( \mu, \lambda  )$-regular ultrafilter $D$
(which can be taken over $ S_ \mu ( \lambda )$)
such that, for every $X \in T$, we have that  $X$ is $D$-pseudocompact.
\item
Every product of any number of members of $T$ (allowing repetitions) is 
$\mathcal O^{\Box}$-$ [ \mu, \lambda ]$-compact.
\item
Every product of members of $T$ (allowing repetitions) 
with at most $\delta$ factors is 
$\mathcal O^{\Box}$-$ [ \mu, \lambda ]$-compact, where
  $\delta= \min \{ 2 ^{2^ \kappa  },
\sup\{|T|, \nu \}\}$,
for $\nu=\sup _{X \in T}  (w(X))^ \kappa $ and 
$\kappa=\lambda ^{<\mu}$  (indeed, this can be improved to $\kappa= \cf S_ \mu ( \lambda )$).  
\end{enumerate}   
\end{theorem}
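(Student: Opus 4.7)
The plan is to follow the template of Theorem \ref{fprodo} step by step, with the family $T$ replacing the single space $X$.

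For (1) $\Rightarrow$ (2): if $D$ is $(\mu,\lambda)$-regular and every $X \in T$ is $D$-pseudocompact, then iterating \cite[Theorem 4.3]{GS} (productivity of $D$-pseudocompactness) shows that any product of members of $T$ is $D$-pseudocompact, and Corollary \ref{regimpcpnbox} yields $\mathcal{O}^{\Box}$-$[\mu,\lambda]$-compactness. The implication (2) $\Rightarrow$ (3) is immediate; the work lies in (3) $\Rightarrow$ (1), which I would split into two cases according to which term realizes the minimum defining $\delta$.

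Case A ($\delta = \sup\{|T|,\nu\}$). For each $X \in T$ fix a base $\mathcal{B}_X$ of cardinality $w(X)$. The number of pairs $(X,\sigma)$ with $X \in T$ and $\sigma$ an $S_\mu(\lambda)$-indexed sequence drawn from $\mathcal{B}_X$ is bounded by $\sup\{|T|,\nu\}$, so I would enumerate all such pairs as $(X_\beta, (Q_{\beta,Z})_{Z \in S_\mu(\lambda)})_{\beta \in I}$ with $|I| \leq \delta$. In the product $Y = \prod_{\beta \in I} X_\beta$ consider the box-open sets $U_Z = \prod_{\beta \in I} Q_{\beta,Z}$. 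By hypothesis $Y$ is $\mathcal{O}^{\Box}$-$[\mu,\lambda]$-compact, so Theorem \ref{ufobox} furnishes an ultrafilter $D$ over $S_\mu(\lambda)$ covering $\lambda$ together with a $D$-limit $x = (x_\beta)_{\beta \in I}$ of $(U_Z)_Z$; projecting coordinatewise, each $x_\beta$ is a $D$-limit of $(Q_{\beta,Z})_Z$. To check that an arbitrary $X \in T$ is $D$-pseudocompact, given a sequence $(O_Z)_Z$ of nonempty open sets of $X$ I would choose $B_Z \in \mathcal{B}_X$ with $B_Z \subseteq O_Z$; the pair $(X,(B_Z)_Z)$ equals some $(X_{\beta_0},(Q_{\beta_0,Z})_Z)$, so $x_{\beta_0}$ is a $D$-limit of $(B_Z)_Z$ and hence of $(O_Z)_Z$.

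Case B ($\delta = 2^{2^\kappa}$). I would argue contrapositively: assume (1) fails, so for every ultrafilter $D$ over $S_\mu(\lambda)$ covering $\lambda$ I can pick some $X_D \in T$ and a sequence $(O_{D,Z})_{Z \in S_\mu(\lambda)}$ of nonempty open sets of $X_D$ admitting no $D$-limit in $X_D$. Since there are at most $2^{2^\kappa}$ such ultrafilters, indexing over them produces a product $Y = \prod_D X_D$ with at most $\delta$ factors, all from $T$. For any ultrafilter $D_0$ covering $\lambda$, a would-be $D_0$-limit $(x_D)_D$ of the box-open sequence $(\prod_D O_{D,Z})_Z$ would project to a $D_0$-limit $x_{D_0}$ of $(O_{D_0,Z})_Z$, contradicting our choice; by Theorem \ref{ufobox}, $Y$ fails to be $\mathcal{O}^{\Box}$-$[\mu,\lambda]$-compact, violating (3). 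The main obstacle is the bookkeeping in Case A: one must enumerate sequences uniformly across all spaces in $T$ while respecting the bound on the number of factors, then reconstruct $D$-pseudocompactness of an arbitrary member of $T$ via projection of $D$-limits together with the shrinking-to-a-base trick. The improvement to $\kappa = \cf S_\mu(\lambda)$ follows as in Remark \ref{cofslm} by replacing $S_\mu(\lambda)$ throughout with a cofinal subset $H$.
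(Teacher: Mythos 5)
Your proposal is correct and follows exactly the route the paper intends: the paper gives no separate argument for Theorem \ref{fprodot}, presenting it as the family version of Theorem \ref{fprodo} (and later subsuming it under Theorem \ref{fprodolm}, whose proof is declared to be ``same as'' that of Theorem \ref{fprodo}), and your adaptation --- enumerating pairs $(X,\sigma)$ of a space in $T$ together with an $S_\mu(\lambda)$-indexed sequence from a fixed base of $X$ for the case $\delta=\sup\{|T|,\nu\}$, and indexing the product by ultrafilters over $S_\mu(\lambda)$ for the case $\delta=2^{2^\kappa}$ --- is precisely the required bookkeeping. Both cases, the use of Theorem \ref{ufobox} and coordinatewise projection of $D$-limits, and the appeal to Remark \ref{cofslm} for the improvement to $\cf S_\mu(\lambda)$, match the paper's argument.
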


\begin{corollary} \label{transferiffreg}
For $\mu$, $\lambda$, $\mu'$ and $\lambda'$ infinite cardinals, the following are equivalent.

(a) Every $( \mu, \lambda )$-regular ultrafilter
is $( \mu', \lambda' )$-regular.

(b) For every family $T$ of topological spaces, 
if every product of any number of members of $T$ (allowing repetitions) is 
$\mathcal O^{\Box}$-$ [ \mu, \lambda ]$-compact, then
every product of any number of members of $T$ (allowing repetitions) is 
$\mathcal O^{\Box}$-$ [ \mu', \lambda' ]$-compact.

(c) For every topological space $X$, 
if every power of $X$ is
$\mathcal O^{\Box}$-$ [ \mu, \lambda ]$-compact,
then every power of $X$ is
$\mathcal O^{\Box}$-$ [ \mu', \lambda' ]$-compact.

(d) Same as (c), restricted to Tychonoff spaces.
\end{corollary}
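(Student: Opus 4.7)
The plan is to establish the cycle (a)$\Rightarrow$(b)$\Rightarrow$(c)$\Rightarrow$(d)$\Rightarrow$(a). The implication (a)$\Rightarrow$(b) is a formal application of Theorem \ref{fprodot}: given a family $T$ such that every product of members of $T$ is $\mathcal O^{\Box}$-$[\mu,\lambda]$-compact, Theorem \ref{fprodot}, (2)$\Rightarrow$(1), produces a $(\mu,\lambda)$-regular ultrafilter $D$ under which every $X\in T$ is $D$-pseudocompact; by hypothesis (a), this same $D$ is also $(\mu',\lambda')$-regular, and re-applying Theorem \ref{fprodot}, (1)$\Rightarrow$(2), with parameters $(\mu',\lambda')$ yields the conclusion of (b). The implications (b)$\Rightarrow$(c) and (c)$\Rightarrow$(d) are immediate: the former by specialising $T$ to a singleton family $\{X\}$ (so that products with repetitions become powers of $X$), and the latter by restricting from arbitrary topological spaces to Tychonoff spaces.

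The substantive direction is (d)$\Rightarrow$(a). Fix a $(\mu,\lambda)$-regular ultrafilter $D$; by Fact \ref{proj} together with Theorem \ref{fprodo}, (1)$\Leftrightarrow$(2), we may assume that $D$ is an ultrafilter over $S_\mu(\lambda)$ that covers $\lambda$. The strategy is to attach to $D$ a Tychonoff space $X_D$ enjoying two properties: (i) $X_D$ is $D$-pseudocompact; and (ii) whenever an ultrafilter $D'$ over some set $J$ makes $X_D$ be $D'$-pseudocompact, there exists a map $z\colon J\to S_\mu(\lambda)$ with $z(D')=D$. Granted such an $X_D$, Theorem \ref{fprodo} shows that every power of $X_D$ is $\mathcal O^{\Box}$-$[\mu,\lambda]$-compact; hypothesis (d) then promotes this to $\mathcal O^{\Box}$-$[\mu',\lambda']$-compactness of every power of $X_D$; and one more invocation of Theorem \ref{fprodo} supplies a $(\mu',\lambda')$-regular ultrafilter $D'$ for which $X_D$ is $D'$-pseudocompact. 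Property (ii) yields a map $z$ with $z(D')=D$, and a careful push-forward through $z$ of the $(\mu',\lambda')$-regularising family of $D'$ produces a $(\mu',\lambda')$-regularising family for $D$, establishing (a).

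The principal obstacle is the construction of $X_D$ and the verification of (ii). A natural candidate is the subspace of the Stone--\v{C}ech compactification $\beta S_\mu(\lambda)$ consisting of all ultrafilters of the form $g(D)$, for $g\colon S_\mu(\lambda)\to S_\mu(\lambda)$; this already contains a copy of $S_\mu(\lambda)$ as the set of principal ultrafilters. As a subspace of a compact Hausdorff space, $X_D$ is Tychonoff, and an argument parallel to the proof of Theorem \ref{ufo} shows that it is $D$-pseudocompact. The delicate point is to control the multiplicities that arise when pushing an arbitrary $(\mu',\lambda')$-regularising family for $D'$ through the projection $z$: one must verify that every $Z\in S_\mu(\lambda)$ belongs to fewer than $\mu'$ of the pushed-forward sets. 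Handling this multiplicity, using the closure of the index set $\{g(D)\colon g\}$ under composition, is the technical crux of the whole corollary, and is the reason a sufficiently rich realisation of $X_D$ inside $\beta S_\mu(\lambda)$ is required.
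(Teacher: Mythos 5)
Your cycle (a)$\Rightarrow$(b)$\Rightarrow$(c)$\Rightarrow$(d) is exactly the paper's argument and is fine. The problem is in (d)$\Rightarrow$(a), and it is not merely that the construction of $X_D$ is left as a sketch: the key property (ii) you require of $X_D$ is stated in the wrong direction, and in that direction it is false and would not suffice anyway. You ask that $D'$-pseudocompactness of $X_D$ yield a map $z$ with $z(D')=D$, i.e.\ $D\leq_{\rk}D'$. But every space is $D'$-pseudocompact for a \emph{principal} $D'$, so (ii) as stated would force $D\leq_{\rk}D'$ for principal $D'$, i.e.\ $D$ principal --- impossible for a nonprincipal $D$. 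Worse, even if you had $D=z(D')$ with $D'$ being $(\mu',\lambda')$-regular, regularity does not descend along the Rudin--Keisler order: a regularizing function $g:J\to S_{\mu'}(\lambda')$ for $D'$ need not factor through $z$, and the image family $z[A_\alpha]$ of a regularizing family loses the multiplicity bound. This is precisely the ``technical crux'' you flag and leave unresolved; it cannot be resolved in general.

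The paper avoids all of this by citing Garcia-Ferreira's space $P_{\rk}(D)$, whose characteristic property points the \emph{other} way: $P_{\rk}(D)$ is $E$-pseudocompact if and only if $E=f(D)$ for some $f$, i.e.\ $E\leq_{\rk}D$. With this, the $(\mu',\lambda')$-regular ultrafilter $E$ produced by Theorem \ref{fprodo} from hypothesis (d) satisfies $E=f(D)$, and regularity transfers \emph{up} the Rudin--Keisler order trivially (if $g$ regularizes $E=f(D)$ then $g\circ f$ regularizes $D$). Your candidate space $\{g(D)\mid g:S_\mu(\lambda)\to S_\mu(\lambda)\}$ is in fact essentially the set of $E\leq_{\rk}D$ concentrated on $S_\mu(\lambda)$, so you were near the right object; but to repair the proof you must (1) reverse property (ii) to ``$D'$-pseudocompactness of $X_D$ implies $D'=f(D)$ for some $f$,'' (2) actually prove both (i) and the reversed (ii) for your subspace of $\beta S_\mu(\lambda)$ (neither is done --- the hard half is showing that $D'$-pseudocompactness \emph{fails} for every $D'\not\leq_{\rk}D$), and (3) replace the push-forward/multiplicity argument by the one-line composition of regularizing functions. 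As it stands, the implication (d)$\Rightarrow$(a) is not established.
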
 

\begin{proof}
(a) $\Rightarrow $  (b) 
Suppose that the assumption in (b) holds.
By Theorem \ref{fprodot} (2) $\Rightarrow $  (1),
there exists some 
$( \mu, \lambda  )$-regular ultrafilter $D$
 such that, for every $X \in T$, we have that  $X$ is $D$-pseudocompact.
By (a), $D$ is $( \mu', \lambda'  )$-regular.
Hence, by Theorem \ref{fprodot} (1) $\Rightarrow $  (2),
every product of any number of members of $T$  is 
$\mathcal O^{\Box}$-$ [ \mu', \lambda' ]$-compact.

(b) $\Rightarrow $  (c) and (c) $\Rightarrow $  (d) are trivial.

(d) $\Rightarrow $  (a) Garcia-Ferreira \cite{GF} constructs, for every ultrafilter $D$,
say over $I$, 
a Tychonoff space $P _{RK}(D) $ such that, for every ultrafilter $E$, say over $J$,
the space $P _{RK}(D) $ is $E$-pseudocompact if and only if $E=f(D)$ 
for some function $f:I \to J$, that is
if and only if 
$E \leq_{\rk} D$ in the Rudin-Keisler order. 

Let $D$ be a $( \mu, \lambda )$-regular ultrafilter, say over $I$. 
By above,  $X=P _{RK}(D) $ is $D$-pseudocompact, hence,
by Theorem \ref{fprodo} (2) $\Rightarrow $  (4),
every power of  $X$ is 
$\mathcal O^{\Box}$-$ [ \mu, \lambda ]$-compact.

By (d),
every power of  $X$ is 
$\mathcal O^{\Box}$-$ [ \mu', \lambda' ]$-compact and,
by Theorem \ref{fprodo} (2) $\Rightarrow $  (4),
$X$ is $E$-pseudocompact, for some
$( \mu', \lambda' )$-regular ultrafilter $E$ over some set $J$.
By the above-mentioned result from \cite{GF}, 
$E=f(D)$, 
for some function $f: I \to J$.
By a trivial property of the Rudin-Keisler order, 
$D$ is $( \mu', \lambda' )$-regular,
thus (a) is proved.
\end{proof}

Many results are known about 
cardinals for which Condition (a)
in Corollary \ref{transferiffreg} holds.
See \cite{mru} for a survey. 
Corollary \ref{transferiffreg}
can be applied in each of these cases.

\begin{remark} \label{gf}
As we mentioned in Remark \ref{notk},
 $ [ \mu, \lambda ]$-compactness is equivalent to 
 $ [ \kappa , \kappa  ]$-compactness  for every $\kappa$ such that 
$\mu \leq \kappa \leq \lambda $. 
We now show that the analogous result fails, in general, for 
 $\mathcal O$-$ [ \mu, \lambda ]$-compactness.

Under some set-theoretical assumption, 
\cite{Ka} constructed an ultrafilter $D$ uniform over $ \omega_1$
and an ultrafilter $D'$ over $ \omega$ such that, for every ultrafilter $E$,   
it happens that  $E \leq _{\rk}  D$ if and only if  $E$ is Rudin-Keisler equivalent
either to $D$ or to $D'$.   
By the results from \cite{GF}  mentioned in the proof 
of Corollary \ref{transferiffreg},
the space $P _{RK}(D) $ is
both $D$-pseudocompact  and 
$D'$-pseudocompact,  hence  
both $\mathcal O$-$ [ \omega , \omega  ]$-compact and
$\mathcal O$-$ [ \omega_1 , \omega_1  ]$-compact,
since every uniform ultrafilter over some cardinal 
$\lambda$ is $( \lambda , \lambda  )$-regular
(see, e. g., \cite{mru}).
Indeed, by Corollary \ref{regimpcpnbox}, 
all powers of $P _{RK}(D) $ are even
both $\mathcal O^{\Box}$-$ [ \omega , \omega  ]$-compact and
$\mathcal O^{\Box}$-$ [ \omega_1 , \omega_1  ]$-compact.

However, \cite{GF} proved that  
$P _{RK}(D) $ is not even
$ \omega _1$-pseudocompact.
Since, by \cite[Theorem 2(c)]{Re},  every   
$\mathcal O$-$ [ \omega , \lambda  ]$-compact Tychonoff 
space is $\lambda$-\brfrt pseudocompact, 
we have that 
$P _{RK}(D) $ is not
$\mathcal O$-$ [ \omega , \omega_1  ]$-compact
($\mathcal O$-$ [ \omega , \lambda  ]$-compact spaces
are called weakly-initially compact in \cite{Re}).
 \end{remark}

\section{The abstract framework} \label{abstrfr} 

In this final section we mention that our results actually hold
in the general framework introduced in \cite{LiF}.
 In \cite{LiF} each compactness property is defined relative to some
family $\mathcal F$ of subsets of a topological space $X$.
By taking $\mathcal F$ to be either the set of all singletons of $X$, or
 the set of all nonempty open sets of $X$,
this generalized  approach provides
a unified treatment of definitions and results
about $ [ \mu, \lambda ]$-compactness
and related compactness notions, on one side, 
and about
$\mathcal O$-$ [ \mu, \lambda ]$-compactness
and related pseudocompactness-like notions,
on the other side.

In the case of  $ [ \mu, \lambda ]$-compactness, 
as we shall point after each single result,
most of the theorems we get are known;
in the case when $\mathcal F= \mathcal O$ 
we usually get back the results obtained in the previous sections.

\begin{definition} \label{f} 
The definitions of $\mathcal F$-$ [ \mu, \lambda ]$-compactness and of $\mathcal F$-$D$-compactness
  can be obtained, respectively, from the 
definitions of 
$\mathcal O$-$ [ \mu, \lambda ]$-compactness (Definition \ref{fcpnop})
and of $D$-pseudocompactness (Definition \ref{dpseudocp}), by replacing the family $\mathcal O$  off all nonempty
open sets with another  specified  family $\mathcal F$ of subsets of $X$.

In more detail, let $X$ be a topological space,
and let $\mathcal F$ be any family of subsets of $X$.

Let $\lambda$ and $\mu$ be infinite cardinals.
We say that $X$ is
$\mathcal F$-$ [ \mu, \lambda ]$-\emph{compact}
if and only if,
for every sequence $( C _ \alpha ) _{ \alpha \in \lambda } $
 of closed sets of $X$, if,
for every $Z \subseteq \lambda $ with $ |Z|< \mu$,
there exists $F \in \mathcal F$ such that   
$ \bigcap _{ \alpha \in Z}  C_ \alpha \supseteq F$,
then  
$ \bigcap _{ \alpha \in \lambda }  C_ \alpha \not= \emptyset $.

Let $D$ be an ultrafilter over some set $Z$.
We say that $X$ is  $\mathcal F$-$D$-\emph{compact} if and only if every sequence
$(F_z)_{z \in Z}$ 
of members of $\mathcal F$ 
has some $D$-limit point in $X$.

When, in the preceding definitions, 
$\mathcal F= \mathcal O$, the family of all the nonempty open sets 
of $X$, we get back Definitions \ref{fcpnop}
and \ref{dpseudocp}.
When
$\mathcal F$ is taken to be the family of all singletons of $X$, 
we get back the more familiar notions of, respectively, $ [ \mu, \lambda ]$-compactness and of $D$-compactness.
See \cite{LiF} for more information. 
In particular, notice that, for $\mu= \lambda $
a regular cardinal,  \cite{LiF} provides a very refined theory of
$\mathcal F$-$ [ \lambda , \lambda ]$-compactness.
In the particular case $\mu= \lambda $ regular, the results presented
in  \cite{LiF} are usually stronger than the results presented here
for 
$\mathcal F$-$ [ \mu , \lambda ]$-compactness.
Notice also that,  by Remark \ref{gf}, the theory of 
$\mathcal F$-$ [ \mu , \lambda ]$-compactness, in general, cannot be ``reduced'' to the theory
of $\mathcal F$-$ [ \kappa  , \kappa  ]$-compactness.
On the contrary, it is a very useful fact that 
$ [ \mu, \lambda ]$-compactness can be studied in terms of 
$ [ \kappa  , \kappa  ]$-compactness, for $\mu \leq \kappa \leq \lambda $
(Remark \ref{notk}).

Notice that 
if $X$ is realized as a Tychonoff product $ \prod _{j \in J} X_j$, then
$\mathcal O ^{\Box} $-$ [ \mu, \lambda ]$-compactness,
as introduced in Definition \ref{fcpnopbox},
is the same as 
$\mathcal F$-$ [ \mu, \lambda ]$-compactness 
of $ \prod _{j \in J} X_j$,
when we take $\mathcal F$ to be the family of all open
sets in $ \bigbox _{j \in J} X_j$, that is, the open sets in the box topology.
\end{definition}

 If $\mathcal F$ is a family of subsets of some topological space,
we denote by  $ \bigvee \mathcal F$ (resp., $ \bigvee _{\leq \kappa }  \mathcal F$),
the family of all subsets of $X$ which can be obtained as the union of
the members of some subfamily of $\mathcal F$ (resp., some subfamily 
of cardinality $\leq \kappa $).

 \begin{theorem} \label{equivfdcpn} 
 Suppose that $X$ is a topological space, $\mathcal F$ 
is a family of subsets of $X$, and 
  $\lambda$ and $\mu$ are infinite cardinals.
Then the following are equivalent.
\begin{enumerate} 
\item 
$X$ is
$\mathcal F$-$ [ \mu, \lambda ]$-compact.

\item 
 For every  sequence  $( P _ \alpha ) _{ \alpha \in \lambda } $
of subsets of $X$, if,
for every $Z \subseteq \lambda $ with $ |Z|< \mu$,
there exists some $F_Z \in\mathcal F$    such that   
$ \bigcap _{ \alpha \in Z}  P_ \alpha \supseteq F_Z$,
then  
$ \bigcap _{ \alpha \in \lambda }  \overline{P}_ \alpha \not= \emptyset $.

\item 
 For every  sequence  $( Q _ \alpha ) _{ \alpha \in \lambda } $
of sets in $\bigvee \mathcal F $
(equivalently, in $\bigvee \mathcal F _{\leq \kappa }$, 
where $\kappa= \lambda ^{<\mu}$), if,
for every $Z \subseteq \lambda $ with $ |Z|< \mu$,
there exists some $F_Z \in\mathcal F$    such that   
$ \bigcap _{ \alpha \in Z}  Q_ \alpha \supseteq F_Z$,
then  
$ \bigcap _{ \alpha \in \lambda }  \overline{Q}_ \alpha \not= \emptyset $.
The value of $\kappa$ can be improved to $\cf S_\mu ( \lambda )$. 
\item 
For every  sequence 
$\{ F_Z \mid Z \in S_ \mu ( \lambda ) \}$
of members of $\mathcal F$,
it happens that
    $ \bigcap _{ \alpha \in \lambda } 
 \overline{  \bigcup  \{ F_Z \mid  Z \in S_ \mu ( \lambda ),  \alpha \in Z \}}
  \not= \emptyset $.

\item 
For every  sequence 
$\{ F_Z \mid Z \in S_ \mu ( \lambda ) \}$
of members of $\mathcal F$,
the following holds.
If, for every finite subset $W$  of $\lambda$,
we put  
$Q_W= 
  \bigcup \{ F_Z \mid Z \in S_ \mu ( \lambda ) \text{ and } Z \supseteq
W\}$,
then 
$\bigcap \{ \overline{Q_W} \mid W \text{ is a finite }
 \brfr 
\text{subset of } \lambda  \}\not= \emptyset $.

\item
For every $\lambda$-indexed open cover
 $( Q _ \alpha ) _{ \alpha \in \lambda } $
 of $X$, there exists $Z \subseteq \lambda $, with $ |Z|< \mu $,
such that
$F \cap \bigcup _{ \alpha \in Z}  Q_ \alpha \not= \emptyset  $,
for every $F \in \mathcal F$.

\item
For every  sequence 
$\{ F_Z \mid Z \in S_ \mu ( \lambda ) \}$
of elements of $\mathcal F$,
there exists an ultrafilter $D$ over 
$S _\mu ( \lambda )$ which covers $\lambda$ and 
such that 
$\{ F_Z \mid Z \in S_ \mu ( \lambda ) \}$
has some $D$-limit point  in $X$.
\end{enumerate}
 \end{theorem}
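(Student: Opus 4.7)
The plan is to establish everything by directly transferring the arguments of Proposition~\ref{equivo} and Theorem~\ref{ufo}, replacing ``nonempty open set $O_Z$'' by ``member $F_Z$ of $\mathcal F$'' throughout. Concretely I would prove the cycle $(1) \Rightarrow (2) \Rightarrow (3) \Rightarrow (5) \Rightarrow (4) \Rightarrow (1)$, together with $(1) \Leftrightarrow (6)$ by taking complements and $(1) \Leftrightarrow (7)$ by the argument of Theorem~\ref{ufo}. The equivalence of the two versions of (3) (sets in $\bigvee\mathcal F$ versus in $\bigvee_{\leq\kappa}\mathcal F$) will fall out of the cycle once I observe that each $Q_W$ produced in the implication $(3)\Rightarrow (5)$ is a union of at most $|S_\mu(\lambda)| = \kappa$ members of $\mathcal F$, so the version of (3) restricted to $\bigvee_{\leq\kappa}\mathcal F$ already suffices to drive the argument, while the unrestricted version trivially implies the restricted one.

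For the main cycle, $(1)\Rightarrow (2)$ is obtained by setting $C_\alpha = \overline{P}_\alpha$, and $(2) \Rightarrow (3)$ is trivial since sets in $\bigvee\mathcal F$ are in particular subsets of $X$. For $(3) \Rightarrow (5)$, I apply (3) to the $\lambda$-indexed sequence $\{Q_W : W \text{ a finite subset of } \lambda\}$ of elements of $\bigvee\mathcal F$, noting that for any $\nu < \mu$ and finite subsets $W_\beta$, $\beta < \nu$, the set $Z = \bigcup_\beta W_\beta$ lies in $S_\mu(\lambda)$, contains each $W_\beta$, and hence $F_Z \subseteq Q_{W_\beta}$ for every $\beta$, giving $\bigcap_\beta Q_{W_\beta} \supseteq F_Z$. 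The step $(5) \Rightarrow (4)$ is immediate since the intersection in (4) equals $\bigcap_\alpha \overline{Q_{\{\alpha\}}}$, which is a superset of the intersection in (5). Finally $(4) \Rightarrow (1)$ proceeds by defining $C'_\alpha = \overline{\bigcup\{F_Z : Z \in S_\mu(\lambda),\ \alpha \in Z\}}$ and observing that $C'_\alpha \subseteq C_\alpha$ since $C_\alpha$ is closed and contains each relevant $F_Z$. The equivalence $(1) \Leftrightarrow (6)$ comes by complements: if $Q_\alpha = X \setminus C_\alpha$, then the covering condition amounts to $\bigcap_\alpha C_\alpha = \emptyset$, and the density-type condition in (6) says precisely that no $F \in \mathcal F$ satisfies $F \subseteq \bigcap_{\alpha \in Z} C_\alpha$.

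The ultrafilter equivalence $(1) \Leftrightarrow (7)$ mirrors Theorem~\ref{ufo} line by line: given (1) and $\{F_Z\}$, I use $(5)$ to pick $x \in \bigcap_W \overline{Q_W}$, then show that the family $\{[\alpha) : \alpha \in \lambda\} \cup \{A_U : U \text{ neighborhood of } x\}$, with $A_U = \{Z \in S_\mu(\lambda) : F_Z \cap U \neq \emptyset\}$, has the finite intersection property, and extend it to an ultrafilter $D$ covering $\lambda$ and having $x$ as a $D$-limit of $\{F_Z\}$. Conversely, if $x$ is a $D$-limit point of $\{F_Z\}$ and $D$ covers $\lambda$, then since $F_Z \subseteq C'_\alpha$ whenever $\alpha \in Z$, the point $x$ must lie in $C'_\alpha$ for every $\alpha$, yielding (4). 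For the refinement $\kappa = \cf S_\mu(\lambda)$ promised in (3), I invoke Remark~\ref{cofslm}: fixing a cofinal $H \subseteq S_\mu(\lambda)$ of minimum cardinality and substituting, for each $Z \in S_\mu(\lambda)$, some $Z' \in H$ with $Z' \supseteq Z$, the sets $Q_W$ become unions of at most $|H| = \cf S_\mu(\lambda)$ members of $\mathcal F$. The main obstacle I anticipate is bookkeeping rather than substance: verifying that in each of clauses (3)--(7) the passage from $S_\mu(\lambda)$-indexing to $H$-indexing preserves both hypotheses and conclusions; once Remark~\ref{cofslm} is internalized, the improvement is essentially mechanical.
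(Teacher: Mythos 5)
Your proposal is correct and follows essentially the same route as the paper, whose proof of this theorem is simply a pointer to the proofs of Proposition \ref{equivo}, the complement remark in Definition \ref{fcpnop}, and Theorem \ref{ufo}, together with Remark \ref{cofslm} for the cofinality refinement; you have fleshed out exactly those arguments with $F_Z \in \mathcal F$ replacing the nonempty open sets $O_Z$, including the same cycle $(1)\Rightarrow(2)\Rightarrow(3)\Rightarrow(5)\Rightarrow(4)\Rightarrow(1)$, the complementation for (6), and the finite-intersection-property construction for (7). Your observation that each $Q_W$ lies in $\bigvee_{\leq\kappa}\mathcal F$, which sandwiches both versions of (3) inside the cycle, is the right way to handle the parenthetical claim there.
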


 \begin{proof} 
Same as the proofs of  Proposition  \ref{equivo},
of the last remark in Definition \ref{fcpnop} 
 and of
 Theorem \ref{ufo}. See also 
Remark \ref{cofslm}.
\end{proof}  

Proposition \ref{equivo} and Theorem \ref{ufo}  can be obtained as the particular case of 
Theorem \ref{equivfdcpn}, when $\mathcal F= \mathcal O$ is the family
of the nonempty open sets of $X$.

Proposition \ref{equivobox} and Theorem \ref{ufobox}  can be obtained as the particular case of 
Theorem \ref{equivfdcpn}, when  $X $ is the topological space 
$ \prod _{j \in J} X_j$ (with the Tychonoff topology), and
 $\mathcal F$ is the family
of the nonempty open sets of 
$ \bigbox _{j \in J} X_j$ (with the box topology).

Thus, Theorem \ref{equivfdcpn} provides a generalization of 
all the above results.

As we mentioned in Remark \ref{caic}, in the particular case
when $\mathcal F$ is the family $\mathcal S$ of all singletons, the 
implication (1) $\Rightarrow $   (7) in Theorem \ref{equivfdcpn}
is proved in \cite{Cprepr,C}.  
Again when $\mathcal F =\mathcal S$, the equivalence of 
(1) and (2) in Theorem \ref{equivfdcpn} has been proved in \cite{G},
with different notation. See also 
\cite[Lemma 5(b)]{Vfund}.

\begin{theorem} \label{equivodcpnf} 
 Suppose that $X$ is a topological space, $\mathcal F$ 
is a family of subsets of $X$, and 
$D$ is an ultrafilter over some set $I$.
Then the following are equivalent.
\begin{enumerate} 
\item 
$X$ is $\mathcal F$-$D$-compact.

\item
For every sequence 
$\{ F_i \mid i \in I \}$
of members of $\mathcal F$, if, for 
$Z \in D$, we put
$C_Z= \overline{ \bigcup _{i \in Z} F_i} $, then
we have that 
$\bigcap _{Z \in D} C_Z \not= \emptyset  $.

\item 
Whenever $(C_Z) _{Z \in D} $ is a  sequence  of closed sets of $X$
with the property that, for every $i \in I$, there exists some $F \in \mathcal F$  
such that $\bigcap _{i\in Z} C_Z \supseteq F$, then  
$\bigcap _{Z \in D} C_Z \not= \emptyset  $.

\item 
For every open cover $(O_Z) _{Z \in D} $ of $X$,
there is some $i \in I$ such that  
$F \cap \bigcup _{i\in Z} O_Z   \not= \emptyset $,
for every $F \in \mathcal F$.
\end{enumerate} 
\end{theorem}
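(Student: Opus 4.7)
The plan is to mimic the proof of Theorem \ref{equivodcpn}, which is precisely the case $\mathcal F = \mathcal O$. Throughout, ``nonempty open set'' is replaced by ``element of $\mathcal F$'', and one verifies that no other feature of $\mathcal O$ is used. Concretely, I would establish the cycle (1) $\Rightarrow$ (2) $\Rightarrow$ (3) $\Rightarrow$ (1), and then derive (3) $\Leftrightarrow$ (4) by taking complements.

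For (1) $\Rightarrow$ (2), let $(F_i)_{i \in I}$ be a sequence in $\mathcal F$ and let $x$ be a $D$-limit point, which exists by $\mathcal F$-$D$-compactness. For any $Z \in D$ and any neighborhood $U$ of $x$, the set $Z \cap \{i \in I \mid U \cap F_i \neq \emptyset\}$ lies in $D$, hence is nonempty; any $i$ in it gives $U \cap F_i \neq \emptyset$ with $F_i \subseteq C_Z$, so $U \cap C_Z \neq \emptyset$. Since $C_Z$ is closed, $x \in C_Z$, so $x \in \bigcap_{Z \in D} C_Z$. For (2) $\Rightarrow$ (3), given $(C_Z)_{Z \in D}$ as in (3), pick for each $i \in I$ some $F_i \in \mathcal F$ with $F_i \subseteq \bigcap_{i \in Z} C_Z$ and apply (2); the resulting point lies in $\bigcap_{Z \in D} \overline{\bigcup_{i \in Z} F_i} \subseteq \bigcap_{Z \in D} C_Z$, because each $C_Z$ is closed and contains each $F_i$ with $i \in Z$. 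For (3) $\Rightarrow$ (1), start from $(F_i)_{i \in I}$ in $\mathcal F$, set $C_Z = \overline{\bigcup_{i \in Z} F_i}$, observe that $F_i$ witnesses the hypothesis of (3) for each $i$, take $x \in \bigcap_{Z \in D} C_Z$, and argue as in \ref{equivodcpn}: if $x$ failed to be a $D$-limit, some neighborhood $U$ of $x$ would give $Z = \{i \in I \mid U \cap F_i = \emptyset\} \in D$, contradicting $x \in C_Z$.

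For (3) $\Leftrightarrow$ (4), set $O_Z = X \setminus C_Z$. Then $(O_Z)_{Z \in D}$ is an open cover of $X$ iff $\bigcap_{Z \in D} C_Z = \emptyset$, and the clause ``$F \cap \bigcup_{i \in Z} O_Z \neq \emptyset$ for every $F \in \mathcal F$'' is the complement-side rendering of ``no $F \in \mathcal F$ satisfies $F \subseteq \bigcap_{i \in Z} C_Z$''. The contrapositive of (3) thus reads exactly as (4), and vice versa.

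I do not expect any real obstacle: the arguments for \ref{equivodcpn} make no use of the openness of the $F_i$ beyond the containment $F_i \subseteq \bigcup_{i \in Z} F_i \subseteq C_Z$ and the possibility of choosing one $F_i \in \mathcal F$ per index $i$. The only point demanding mild care is the (3) $\Leftrightarrow$ (4) translation, where the existential ``there exists $F \in \mathcal F$ with $F \subseteq \bigcap_{i \in Z} C_Z$'' dualizes to the universal ``$F \cap \bigcup_{i \in Z} O_Z \neq \emptyset$ for every $F \in \mathcal F$''; once this swap is made explicit, the rest is bookkeeping.
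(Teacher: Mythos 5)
Your proposal is correct and is exactly the paper's approach: the paper proves Theorem \ref{equivodcpnf} by stating that the argument is the same as for Theorem \ref{equivodcpn}, i.e.\ by replacing nonempty open sets with members of $\mathcal F$ throughout, which is precisely the substitution you carry out and verify.
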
 

\begin{proof}
Similar to the proof of Theorem \ref{equivodcpn}.
 \end{proof}  

Theorem \ref{equivodcpn} could be obtained as the particular case 
$\mathcal F=\mathcal O$  of Theorem \ref{equivodcpnf}.

The particular case of Theorem \ref{equivodcpnf} when 
$\mathcal F$ is the set of all singletons of $X$ might be new,
so we state it explicitly.

\begin{corollary} \label{equivodcpnfs} 
 Suppose that $X$ is a topological space, 
and $D$ is an ultrafilter over some set $I$.
Then the following are equivalent.
\begin{enumerate} 
\item 
$X$ is $D$-compact.

\item
For every sequence 
$\{ x_i \mid i \in I \}$
of elements of $X$, if, for 
$Z \in D$, we put
$C_Z= \overline{ \{ x_i \mid i \in Z\}} $, then
we have that 
$\bigcap _{Z \in D} C_Z \not= \emptyset  $.

\item 
Whenever $(C_Z) _{Z \in D} $ is a  sequence  of closed sets of $X$
with the property that, for every $i \in I$,
$\bigcap _{i\in Z} C_Z \not= \emptyset  $, then  
$\bigcap _{Z \in D} C_Z \not= \emptyset  $.

\item 
For every open cover $(O_Z) _{Z \in D} $ of $X$,
there is some $i \in I$ such that  
$(O_Z) _{i \in Z } $
is a cover of $X$.
\end{enumerate} 
\end{corollary}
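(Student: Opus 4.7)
The plan is to derive this corollary as a direct specialization of Theorem \ref{equivodcpnf} in which $\mathcal{F}$ is taken to be $\mathcal{S}$, the family of all singletons of $X$. The main task is therefore not a new argument, but rather a careful translation of each of the four conditions of Theorem \ref{equivodcpnf} into the singleton setting.

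First I would observe the unified identification of sequences. An $I$-indexed sequence $(F_i)_{i \in I}$ of singletons corresponds canonically to a sequence $(x_i)_{i \in I}$ of points of $X$, via $F_i = \{x_i\}$. Under this bijection, a point $x$ is a $D$-limit of $(\{x_i\})_{i \in I}$ precisely when $\{i \in I \mid \{x_i\} \cap U \not= \emptyset\} = \{i \in I \mid x_i \in U\} \in D$ for every neighborhood $U$ of $x$, which is the classical definition of $D$-convergence. Hence Condition (1) of Theorem \ref{equivodcpnf}, for $\mathcal{F} = \mathcal{S}$, is exactly $D$-compactness of $X$ as usually defined, i.e.\ Condition (1) of the corollary.

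Next I would translate Conditions (2)--(4) of Theorem \ref{equivodcpnf}. For (2), the set $\bigcup_{i \in Z} F_i$ with $F_i = \{x_i\}$ becomes $\{x_i \mid i \in Z\}$, so the defining formula $C_Z = \overline{\bigcup_{i \in Z} F_i}$ collapses to $C_Z = \overline{\{x_i \mid i \in Z\}}$, giving Condition (2) of the corollary. For (3), the hypothesis ``there exists some $F \in \mathcal{F}$ with $\bigcap_{i \in Z} C_Z \supseteq F$'' is, when $\mathcal{F} = \mathcal{S}$, simply the requirement that $\bigcap_{i \in Z} C_Z$ contains some point, i.e.\ is nonempty; this yields Condition (3) of the corollary. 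For (4), the requirement ``$F \cap \bigcup_{i \in Z} O_Z \not= \emptyset$ for every $F \in \mathcal{F}$'' amounts to ``$\{x\} \cap \bigcup_{i \in Z} O_Z \not= \emptyset$ for every $x \in X$'', which is exactly the statement that $(O_Z)_{i \in Z}$ covers $X$.

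Putting these translations together, each of the four conditions of the corollary is literally the corresponding condition of Theorem \ref{equivodcpnf} for $\mathcal{F} = \mathcal{S}$, so the equivalences follow at once. There is no real obstacle: the only point to verify carefully is the trivial reformulation in (3), namely that ``contains some element of $\mathcal{F}$'' is equivalent to ``is nonempty'' exactly when $\mathcal{F}$ is the family of all singletons; no separation axiom is needed because singletons (as subsets of $X$) always exist.
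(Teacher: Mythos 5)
Your proposal is correct and is exactly the paper's intended argument: the paper states this corollary explicitly as the particular case of Theorem \ref{equivodcpnf} in which $\mathcal F$ is the family of all singletons of $X$, and your translations of conditions (1)--(4) into the singleton setting are all accurate. Nothing further is needed.
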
   

\begin{theorem} \label{fprodolm} 
 Suppose that $\lambda$ and $\mu$ are infinite cardinals, $T$ is a family of topological spaces, 
and, for every $X \in T$, $\mathcal F_X$ is a family of subsets of $X$.

To every product $ \prod _{j \in J} X_j $, where each $X_j$ belongs to $T$,
associate the family $\mathcal F=
\{  \prod _{j \in J} F_j \mid F _{X_j}  \in \mathcal F_j \text{, for every  } j \in J \}$. 

Then 
the following are equivalent. 
\begin{enumerate} 
\item
There exists some ultrafilter 
 $D$ over  
$ S_ \mu ( \lambda )$ which covers $\lambda$, 
and such that, for every  $X \in T$, we have that $X$  is $\mathcal F_X$-$D$-compact.
\item 
There exists some $( \mu, \lambda  )$-regular ultrafilter $D$
(over any set)
such that, for every  $X \in T$, we have that $X$  is $\mathcal F_X$-$D$-compact.
\item 
There exists some $( \mu, \lambda  )$-regular ultrafilter $D$
such that, 
for every set $J$, 
every product $ \prod _{j \in J} X_j $ of members of $T$ (allowing repetitions)
is  $\mathcal F$-$D$-compact.
\item
For every set $J$, 
every product $ \prod _{j \in J} X_j $ of members of $T$ (allowing repetitions),
 is 
$\mathcal F $-$ [ \mu, \lambda ]$-compact.
\item
Let $\delta= \min \{ 2 ^{2^ \kappa  }, 
\sup \{ |T|, \sup _{X \in T} | \mathcal F_X| ^ \kappa  \}$,
where $\kappa= \lambda ^{<\mu}$
(indeed, this can be improved to $\kappa= \cf S_ \mu ( \lambda )$).  
For every set $J$ with $|J| \leq \delta $,
every product $ \prod _{j \in J} X_j $ of 
 members of $T$ (allowing repetitions)
 is 
$\mathcal F $-$ [ \mu, \lambda ]$-compact.
\end{enumerate}   
\end{theorem}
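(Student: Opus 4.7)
The plan is to establish the cyclic chain (1) $\Rightarrow$ (2) $\Rightarrow$ (3) $\Rightarrow$ (4) $\Rightarrow$ (5) $\Rightarrow$ (1), modelled on the proof of Theorem \ref{fprodo} but adapted to general families $\mathcal F_X$ and to arbitrary families $T$ of spaces. The implication (1) $\Rightarrow$ (2) is immediate, since every ultrafilter over $S_\mu(\lambda)$ covering $\lambda$ is automatically $(\mu,\lambda)$-regular via the identity function on $S_\mu(\lambda)$.

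For (2) $\Rightarrow$ (3) I would first record an abstract product-preservation fact generalising \cite[Theorem 4.3]{GS}: if $D$ is any ultrafilter and each $X_j$ is $\mathcal F_{X_j}$-$D$-compact, then $\prod_{j \in J} X_j$ is $\mathcal F$-$D$-compact, where $\mathcal F$ is the family defined in the statement. The verification is by coordinate-wise selection of $D$-limits, and is valid precisely because members of $\mathcal F$ are products of members of the individual $\mathcal F_{X_j}$. The implication (3) $\Rightarrow$ (4) then follows from the $\mathcal F$-analogue of Corollary \ref{regimpcpn}: one invokes Fact \ref{proj} (whose statement transfers verbatim to $\mathcal F$-$D$-compactness) to push $D$ forward along a witness $f : I \to S_\mu(\lambda)$ of $(\mu,\lambda)$-regularity, and then applies Theorem \ref{equivfdcpn}(1) $\Leftrightarrow$ (7) to $f(D)$. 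The implication (4) $\Rightarrow$ (5) is trivial.

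The main work is in (5) $\Rightarrow$ (1), which I would split according to the two arguments of the minimum defining $\delta$. For the bound $\sup\{|T|, \sup_{X \in T}|\mathcal F_X|^\kappa\}$, I would enumerate, as $\beta$ ranges over an index set $J$ of that cardinality, all pairs $(X_\beta, (F_{\beta,Z})_{Z \in S_\mu(\lambda)})$ with $X_\beta \in T$ and $(F_{\beta,Z})_Z$ an $S_\mu(\lambda)$-indexed sequence of members of $\mathcal F_{X_\beta}$; the cardinality count $|T| \cdot \sup_{X \in T}|\mathcal F_X|^\kappa$ makes this possible with repetitions. Inside $\prod_{\beta \in J} X_\beta$, the sets $\prod_{\beta \in J} F_{\beta,Z}$ form an $S_\mu(\lambda)$-indexed family of members of $\mathcal F$; applying (5) together with Theorem \ref{equivfdcpn}(1) $\Rightarrow$ (7) yields an ultrafilter $D$ over $S_\mu(\lambda)$ covering $\lambda$ and a $D$-limit $x = (x_\beta)_{\beta \in J}$. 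The trivial projection property of $D$-limits (as used in the proof of Theorem \ref{fprodo}) then makes each $x_\beta$ a $D$-limit of $(F_{\beta,Z})_Z$, and since every $(X, (F_Z)_Z)$ with $X \in T$ has been enumerated as some $(X_\beta, (F_{\beta,Z})_Z)$, we conclude that every $X \in T$ is $\mathcal F_X$-$D$-compact. For the bound $\delta = 2^{2^\kappa}$, I would argue contrapositively, exactly as in Theorem \ref{fprodo}: to each of the at most $2^{2^\kappa}$ ultrafilters $D$ over $S_\mu(\lambda)$ covering $\lambda$ assign a witnessing pair $(X_D, (F_{D,Z})_Z)$ with no $D$-limit, form the product over these $D$'s, and appeal to Theorem \ref{equivfdcpn} to contradict (5).

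The main obstacle will be the bookkeeping in the first case of (5) $\Rightarrow$ (1): one must organise a single index set $J$ of cardinality $\leq \delta$ that simultaneously captures every $X \in T$ and every $S_\mu(\lambda)$-indexed $\mathcal F_X$-sequence, while ensuring that the projection argument recovers $\mathcal F_X$-$D$-compactness uniformly for the same $D$ for all $X \in T$. Once this is in place, the refinement $\kappa = \cf S_\mu(\lambda)$ follows by the cofinal-substitution observation of Remark \ref{cofslm}, applied uniformly to every occurrence of $S_\mu(\lambda)$ in the above argument.
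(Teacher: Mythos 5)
Your proposal is correct and follows essentially the same route as the paper, which proves the theorem by transplanting the arguments of Corollary \ref{regimpcpn} and Theorem \ref{fprodo} to the $\mathcal F$-setting: the product-preservation and pushforward facts you state are exactly the cited \cite[Proposition 5.1 and Fact 6.1]{LiF}, the bridge to condition (4) is Theorem \ref{equivfdcpn} (1) $\Leftrightarrow$ (7), and your two-case treatment of (5) $\Rightarrow$ (1) (enumeration of all pairs $(X,(F_Z)_Z)$ versus the $2^{2^\kappa}$ contrapositive) mirrors the proof of Theorem \ref{fprodo}. The cofinality refinement via Remark \ref{cofslm} is also as in the paper.
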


 \begin{proof} 
Same as the proofs  of  
Corollary \ref{regimpcpn} and of  
Theorem \ref{fprodo},
 using \cite[Fact 6.1 and Proposition 5.1 (b) with $\nu= |J^+|$]{LiF}
and Theorem \ref{equivfdcpn} (7) $\Leftrightarrow $  (1).
For (5), see also Remark \ref{cofslm}.
\end{proof}  

Theorem \ref{fprodolm} is more general than 
Theorems \ref{fprodo} and \ref{fprodot}.
In the particular case
when $\mathcal F$ is the family $\mathcal S$ of all singletons,
Theorem \ref{fprodolm} is essentially
\cite[Theorem 3.4]{C} (in some cases, our evaluation of $\delta$ 
 might be slightly sharper).
Corollaries \ref{regimpcpn} and \ref{regimpcpnbox} 
are  immediate consequences of Theorem \ref{fprodolm} (2) $\Rightarrow $  (4), by taking,
for every $j \in J$, 
$\mathcal F_j$  to be the family of all nonempty open sets
of $X_j$.

The following easy proposition,
generalizing Lemma \ref{lm}, 
 describes the behavior of 
$\mathcal F$-$D$-\brfrt compactness 
with respect to quotients.

\begin{proposition} \label{lemf} 
Suppose that  $X$ and $Y$ are topological spaces, and
 $f:X \to Y$ is a continuous function.
Suppose that $\mathcal F$ is a family of subsets of $X$,
and suppose that $\mathcal G$ is a family of subsets of $Y$,
such that for every $G \in \mathcal G$ there is
$F \in\mathcal F$ such that $F \subseteq f ^{-1}(G) $. 

Then the following hold.
\begin{enumerate}
\item
 If $X$ is   
$\mathcal F$-$ [ \mu, \lambda ]$-compact then 
$Y$ is $\mathcal G$-$ [ \mu, \lambda ]$-compact. 

\item
 If $X$ is   
$\mathcal F$-$ D$-compact then 
$Y$ is $\mathcal G$-$D$-compact. 
 \end{enumerate}  
\end{proposition}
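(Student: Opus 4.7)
My plan is to prove both parts by a direct pullback argument along $f$, exploiting continuity to transfer closed sets from $Y$ to $X$ and the hypothesis on $\mathcal{F}$, $\mathcal{G}$ to transfer the witnessing family.

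For part (1), I would start with a sequence $(C_\alpha)_{\alpha \in \lambda}$ of closed sets of $Y$ satisfying the hypothesis of $\mathcal{G}$-$[\mu,\lambda]$-compactness: for each $Z \subseteq \lambda$ with $|Z| < \mu$ there is $G_Z \in \mathcal{G}$ with $\bigcap_{\alpha \in Z} C_\alpha \supseteq G_Z$. I pull back: set $C'_\alpha = f^{-1}(C_\alpha)$, which is closed in $X$ by continuity, and pick $F_Z \in \mathcal{F}$ with $F_Z \subseteq f^{-1}(G_Z)$ using the assumption relating $\mathcal{F}$ and $\mathcal{G}$. Since $f^{-1}$ commutes with intersection, $F_Z \subseteq f^{-1}(\bigcap_{\alpha \in Z} C_\alpha) = \bigcap_{\alpha \in Z} C'_\alpha$, so the $(C'_\alpha)$ satisfy the hypothesis of $\mathcal{F}$-$[\mu,\lambda]$-compactness of $X$. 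Hence there is $x \in \bigcap_{\alpha \in \lambda} C'_\alpha$, and then $f(x) \in \bigcap_{\alpha \in \lambda} C_\alpha$, which is therefore nonempty.

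For part (2), I would start with a sequence $(G_i)_{i \in I}$ of members of $\mathcal{G}$, choose $F_i \in \mathcal{F}$ with $F_i \subseteq f^{-1}(G_i)$ for each $i$, and apply $\mathcal{F}$-$D$-compactness to obtain a $D$-limit point $x \in X$ of $(F_i)_{i \in I}$. I then claim $f(x)$ is a $D$-limit of $(G_i)_{i \in I}$: given a neighborhood $V$ of $f(x)$, continuity gives that $f^{-1}(V)$ is a neighborhood of $x$, so $\{i \in I \mid F_i \cap f^{-1}(V) \neq \emptyset\} \in D$. Any $i$ in that set yields $z \in F_i \cap f^{-1}(V)$, so $f(z) \in G_i \cap V$, giving the inclusion $\{i \in I \mid F_i \cap f^{-1}(V) \neq \emptyset\} \subseteq \{i \in I \mid G_i \cap V \neq \emptyset\}$; the latter set is therefore in $D$, as required.

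I do not expect any real obstacle: the argument is essentially the same as for the classical fact that continuous images of compact or pseudocompact spaces are compact or pseudocompact, and Lemma \ref{lm} is the special case where $\mathcal{F}$ and $\mathcal{G}$ are both the families of nonempty open sets (since preimages of nonempty open sets under a surjection are nonempty open). The only care needed is to distinguish the role of $\mathcal{F}$ (used as witnesses on the $X$-side) from that of $\mathcal{G}$ (used on the $Y$-side), which is precisely what the compatibility hypothesis $F \subseteq f^{-1}(G)$ is designed to mediate.
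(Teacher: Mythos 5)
Your proof is correct, and it is precisely the intended pullback argument: the paper states Proposition \ref{lemf} without proof, calling it easy, and your transfer of closed sets via $f^{-1}$ (for part (1)) and of $D$-limit points via $f$ together with the witness condition $F\subseteq f^{-1}(G)$ (for part (2)) is the standard route, with the surjective-open-preimage observation correctly explaining why Lemma \ref{lm} is the special case $\mathcal F=\mathcal G=\mathcal O$. No gaps.
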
 

We end with a trivial but useful property of 
$\mathcal F$-$ [ \mu, \lambda ]$-compactness.

\begin{proposition} \label{ult} 
Every
$\mathcal F$-$ [ \cf \lambda , \cf \lambda ]$-compact 
topological space is
$\mathcal F$-$ [ \lambda , \lambda ]$-compact. 

In particular, 
every
$\mathcal O$-$ [ \cf \lambda , \cf \lambda ]$-compact 
topological space is
$\mathcal O$-$ [ \lambda , \lambda ]$-compact. 
\end{proposition}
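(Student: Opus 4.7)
The plan is to reduce an arbitrary $\lambda$-indexed witness of $\mathcal F$-$[\lambda,\lambda]$-compactness to a $\cf\lambda$-indexed one by indexing along a cofinal sequence. The only real content is that sub-sums of length $<\cf\lambda$ of a cofinal sequence in $\lambda$ remain bounded below $\lambda$, which is just the definition of cofinality.

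In detail, suppose $X$ is $\mathcal F$-$[\cf\lambda,\cf\lambda]$-compact, set $\kappa=\cf\lambda$, and fix an increasing cofinal sequence $(\lambda_\beta)_{\beta\in\kappa}$ in $\lambda$. Let $(C_\alpha)_{\alpha\in\lambda}$ be a sequence of closed sets in $X$ such that, for every $Z\subseteq\lambda$ with $|Z|<\lambda$, some $F\in\mathcal F$ satisfies $\bigcap_{\alpha\in Z}C_\alpha\supseteq F$. For $\beta\in\kappa$ put $D_\beta=\bigcap_{\alpha<\lambda_\beta}C_\alpha$; each $D_\beta$ is closed, and $|\lambda_\beta|<\lambda$, so by hypothesis $D_\beta$ contains some member of $\mathcal F$.

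Next I will verify the hypothesis of $\mathcal F$-$[\kappa,\kappa]$-compactness for $(D_\beta)_{\beta\in\kappa}$. If $Z'\subseteq\kappa$ has $|Z'|<\kappa=\cf\lambda$, then $\lambda':=\sup_{\beta\in Z'}\lambda_\beta<\lambda$, and
\[
\bigcap_{\beta\in Z'}D_\beta=\bigcap_{\alpha<\lambda'}C_\alpha,
\]
which by assumption contains some $F\in\mathcal F$. Applying $\mathcal F$-$[\kappa,\kappa]$-compactness gives $\bigcap_{\beta\in\kappa}D_\beta\neq\emptyset$, and since $(\lambda_\beta)$ is cofinal in $\lambda$ this intersection equals $\bigcap_{\alpha\in\lambda}C_\alpha$. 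Thus $X$ is $\mathcal F$-$[\lambda,\lambda]$-compact, and the special case $\mathcal F=\mathcal O$ yields the second assertion.

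There is no real obstacle: the argument is essentially just cofinality. One small point to mention is that when $\lambda$ is regular the statement is vacuous, since $\cf\lambda=\lambda$; the content is in the singular case, where the cofinal sequence genuinely has length strictly less than $\lambda$.
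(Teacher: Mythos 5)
Your proof is correct, and it is precisely the standard cofinality argument the paper has in mind when it calls the proposition ``trivial but useful'' and omits the proof: pass to the decreasing sequence of partial intersections $D_\beta=\bigcap_{\alpha<\lambda_\beta}C_\alpha$ along a cofinal $\kappa$-sequence, check that intersections of fewer than $\cf\lambda$ of the $D_\beta$ are again initial-segment intersections of length $<\lambda$ and hence contain a member of $\mathcal F$, and apply $\mathcal F$-$[\cf\lambda,\cf\lambda]$-compactness. No gaps.
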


\bibliographystyle{plain}

\begin{thebibliography}{KaMa}

\bibitem[Ar]{Ar} A. V. Arhangel{$'$}skii,  \emph{Strongly
   $\tau$-pseudocompact spaces}, Topology Appl. \textbf{89} (1998),
   285--298.


  

\bibitem[Ca1]{Cprepr} X. Caicedo, {\em On productive $[\kappa,\lambda]$-compactness, or  the Abstract Compactness Theorem revisited},  manuscript (1995). 

 


\bibitem[Ca2]{C} X. Caicedo, {\em The Abstract Compactness Theorem Revisited}, in {\em Logic and Foundations of Mathematics (A. Cantini et al. editors),} Kluwer Academic Publishers  (1999), 131--141. 


 


 

\bibitem[CoNe]{CNcc}  W. Comfort, S. Negrepontis, {\em     Chain conditions in topology}, Cambridge Tracts in Mathematics \textbf{79},
   Cambridge University Press, Cambridge-New York (1982). 

 


 

\bibitem
 [Fr]{Fr} Z. Frol{\'{\i}}k,  
\emph{
Generalisations of compact and Lindel\"of spaces} (Russian, with expanded English summary),
  Czechoslovak Math. J.
\textbf{9} (1959), 172--217.

 
\bibitem[G\'a]{G} 
I. S. G{\'a}l, 
 \emph{On the theory of $(m,\,n)$-compact topological spaces},
 Pacific J. Math.
 \textbf{8}
  (1958),
721--734.


 


\bibitem
 [Ga]{GF} S. Garcia-Ferreira, \emph{On two generalizations of 
pseudocompactness},
Topology Proc. {\bf 24} (Proceedings of the 14$^\text{th}$
   Summer Conference on General Topology and its Applications
  Held at Long Island University, Brookville, NY, August 4--8,
1999) (2001), 149--172. 

 



\bibitem
[GiSa]{GS} J. Ginsburg and V. Saks,
\emph{Some applications of ultrafilters in topology},
Pacific J. Math. {\bf  57} (1975), 403--418.

 


\bibitem 
[Gl] {Gl} 
I. Glicksberg, \emph{Stone-\v Cech compactifications of products}, Trans. Amer. Math. Soc
{\bf 90} (1959),  369--382.

 



\bibitem[Ka]{Ka}  A.  Kanamori, 
\emph{Finest partitions for ultrafilters},  
   J. Symbolic Logic \textbf{51} (1986),   327--332.
 
\bibitem[Ke]{Ke}  J. F. Kennison, 
   \emph{$m$-pseudocompactness},
   Trans. Amer. Math. Soc. \textbf{104} (1962), 436--442.
 

 


\bibitem[Li1] {topproc} P. Lipparini,  {\em Productive $[\lambda,\mu ]$-compactness and regular ultrafilters}, Topology Proceedings \textbf{21} (1996), 161--171.
 


\bibitem[Li2]{topappl} P. Lipparini,  {\em Compact factors in finally compact products of topological spaces}, Topology and its Applications \textbf{153} (2006), 1365--1382.

 



 


\bibitem[Li3]{mru} P. Lipparini,  \emph{More   on regular and decomposable ultrafilters in ZFC}, accepted by Mathematical Logic Quarterly, preprint available on arXiv:0810.5587 (2008).






\bibitem[Li4]{LiF} P. Lipparini,  
\emph{Some compactness properties related to pseudocompactness
and ultrafilter convergence}, submitted, preprint available at 
arXiv:0907.0602v3


 
\bibitem[Re]{Re}
T. Retta, \emph{Some cardinal
   generalizations of pseudocompactness}, Czechoslovak Math. J. \textbf{43}
   (1993), 385--390. 
 

\bibitem
[Sa]
{Sa} V. Saks,   \emph{Ultrafilter invariants in
   topological spaces}, Trans. Amer. Math. Soc. \textbf{241} (1978), 79--97.

 

\bibitem
[SaSt]
{SS2} V. Saks,  R. M. Stephenson Jr, \emph{Products of $\m {M}$-compact spaces}, Proc. Amer. Math. Soc. {\bf 28}  (1971),  279--288.



\bibitem[ScSt]{SS}  C. T.  Scarborough, A. H. Stone, \emph{Products of     
   nearly compact spaces}, Trans. Amer. Math. Soc. {\bf 124}  (1966),  131--147.  

 \bibitem[Sh]{Sh} S. Shelah, \emph{Cardinal arithmetic},
Oxford
   Logic Guides, Vol. \textbf{29}, Oxford Science Publications. The Clarendon Press,
   Oxford University Press, New York (1994). 


\bibitem[St]{St} R. M. Stephenson Jr, \emph{Pseudocompact spaces}, ch. d-07 in 
\emph{Encyclopedia of general topology}, Edited by
   K. P. Hart, J. Nagata and J. E. Vaughan. Elsevier
   Science Publishers, B.V., Amsterdam (2004).

 

\bibitem[StVa]{SV} R. M. Stephenson Jr, J. E. Vaughan, \emph{Products of initially $\m m$-compact spaces}, Trans. Amer. Math. Soc. {\bf 196}  (1974), 177--189.  
 

 



 \bibitem[Va]{Vfund} J. E. Vaughan, {\em Some properties
 related to [a,b]-compactness}, Fund. Math. \textbf{87} (1975), 251--260. 



\end{thebibliography}

\end{document}